\def\A{$====\Rightarrow$}
\def\B{$\Leftarrow====$}
\begin{document}

\newcommand{\tr}{\mathop{\rm tr}}
\newcommand{\diag}{\mathop{\rm diag}}
\newenvironment{proof}
{\noindent{\bf Proof:}}{\qed}

\newcommand{\qed}{
{\vrule height8pt width5pt depth0pt} \vskip 6pt }
\newcommand{\op}{ \mathop{ \vbox{\hrule\hbox{\vrule
height7pt\kern5pt\vrule}\hrule}\thinspace } }

\protect\newcounter{footercount}[page]
\def\symbolfootnote#1{
\addtocounter{footercount}{1}
\def\thefootnote{\fnsymbol{footercount}} \footnote{#1} }

\newtheorem{theorem}{Theorem}
\newtheorem{proposition}[theorem]{Proposition}
\newtheorem{con}[theorem]{Conjecture}
\newtheorem{remark}[theorem]{Remark}
\newtheorem{lemma}[theorem]{Lemma}
\newtheorem{definition}[theorem]{Definition}
\newtheorem{cor}[theorem]{Corollary}
\newtheorem{example}[theorem]{Example}
\font\german=eufm10
\def\a{{\hbox{\german a}}}
\def\g{{\hbox{\german g}}}
\def\h{{\hbox{\german h}}}
\def\k{{\hbox{\german k}}}
\def\m{{\hbox{\german m}}}
\def\n{{\hbox{\german n}}}
\def\p{{\hbox{\german p}}}
\def\so{{\hbox{\german so}}}
\def\sopq{{\scriptscriptstyle {\bf SO}(p,q)}}
\def\sl{{\scriptscriptstyle {\bf SL}(p,\R)}}
\def\R{{\bf R}}
\def\C{{\bf C}}
\def\F{{\bf F}}
\def\H{{\bf H}}
\def\SO{{\bf SO}}
\def\SU{{\bf SU}}
\def\SL{{\bf SL}}
\def\sln{\hbox{\goth sl}}

\def\S{\mathcal{S}}

\def\Ad{\mathop{\hbox{Ad}}}
\def\ad{\mathop{\hbox{ad}}}

\begin{center}
{\Large \bf On the product formula on non-compact Grassmannians} \\
P. Graczyk and P.\ Sawyer\symbolfootnote{\noindent Supported by l'Agence Nationale de la Recherche
ANR-09-BLAN-0084-01.\\
AMS Subject Classification: 43A90, 53C35.}
\end{center}

\begin{abstract}
We study the absolute continuity of the convolution
$\delta_{e^X}^\natural \star \delta_{e^Y}^\natural$ of two orbital measures on 
the symmetric space ${\bf SO}_0(p,q)/{\bf SO}(p)\times{\bf SO}(q)$, $q>p$.
We prove sharp conditions on $X$, $Y\in\a$ for the
existence of the density of the convolution measure. This measure intervenes in the
product formula for the spherical functions. We show that the sharp criterion developed for $\SO_0(p,q)/\SO(p)\times\SO(q)$ will also serve for the spaces ${\bf SU}(p,q)/{\bf S}({\bf U}(p)\times{\bf U}(q))$ and ${\bf Sp}(p,q)/{\bf Sp}(p)\times{\bf Sp}(q)$, $q>p$. We also apply our results to the study of absolute continuity of convolution powers
of an orbital measure $\delta_{e^X}^\natural$.
\end{abstract}

\section{Introduction}

The spaces ${\bf SO}_0(p,q)/{\bf SO}(p)\times {\bf SO}(q)$ where $q>p$ (which we will assume throughout the paper), are the noncompact duals of real Grassmannians. 
They are Riemannian symmetric spaces of noncompact type corresponding
to root systems of type \A$B_p$\B. The harmonic analysis
on these spaces has been intensely developed in recent years (\cite{Anker,Rosler,Sawyer1,Sawyer2}).

We use throughout the paper the usual notations of the harmonic analysis on Riemannian symmetric spaces.
The books \cite{Helga0, Helga} constitute a standard reference on these topics. 

Let $X$, $Y\in \a$ and let $m_K$ denote the Haar measure of the group $K$. We define
 $\delta_{e^X}^\natural=m_K\star \delta_{e^X} \star m_K$.
The question of the absolute continuity of the convolution $\delta_{e^X}^\natural \star
\delta_{e^Y}^\natural$ of two $K$-invariant orbital measures that we address in our paper
has important applications in harmonic analysis itself (the product formula for the
spherical functions) and in probability theory (random walks, $I_0$
characterization of Gaussian measures).

The spherical Fourier transform of the measure $\delta_{e^X}^\natural$ is equal
to the spherical function $\phi_\lambda (e^X)$, where 
 $\lambda$ is a complex-valued linear form on $\a$. Thus the product 
 $\phi_\lambda (e^X)\phi_\lambda (e^Y)$
 is the spherical Fourier transform of the convolution $m_{X,Y}=\delta_{e^X}^\natural \star
\delta_{e^Y}^\natural$. If we denote by $\mu_{X,Y}$ 
the projection of the measure $m_{X,Y}$ on $\a$ via the Cartan decomposition $G=KAK$, then

\begin{align*}
\phi_\lambda (e^X)\,\phi_\lambda (e^Y) =\int_{\a}\,\phi_\lambda (e^H)\,d\mu_{X,Y} (H).
\end{align*}
Let $\delta$ be the density of the invariant measure on $\a$ in polar coordinates.
The existence of a kernel in the last product formula 
\begin{align}\label{kernel}
\phi_\lambda(e^X)\,\phi_\lambda(e^Y) 
=\int_{\a^+}\,\phi_\lambda(e^H)\,k(H,X,Y)\,\delta(H)\,dH 
\end{align}
 is equivalent to the absolute continuity of the measure $\mu_{X,Y}$ with respect to the Lebesgue measure on $\a$
 and to the existence of the density of $m_{X,Y}$ on $G$, with respect to the invariant measure $dg$.
 When the formula (\ref{kernel}) holds, we say that we have a product formula for $X$ and $Y\in\a$.
 Provided that $X$, $Y\in \a^+$,
 the product formula (\ref{kernel}) has been
shown previously (see \cite{Flensted} in the rank one case, \cite{PGPS1} in the complex case and
\cite{PGPS2} in the general case). In \cite{PGPS2} we were able to relax these conditions and show that $\mu_{X,Y}$ is absolutely continuous provided one of $X$ or $Y$ is in $\a^+$ as long as the other is nonzero. The density can however exist in some cases when both $X$ and $Y$ are singular. It is a challenging problem to characterize all such pairs $X$ and $Y$. 

This problem was solved in \cite{PGPS_Lie2010} for symmetric spaces with root system of type $A_n$. We solve it in this paper for the space ${\bf SO}_0(p,q)/{\bf SO}(p)\times {\bf SO}(q)$: we give a definition of an eligible pair $(X,Y)$ (Definition \ref{defEligible}) and next we prove the necessity (Proposition \ref{Necessary}) and the sufficiency (Proposition \ref{VXVYp} and Theorem \ref{MainReduced}) of this property for the absolute continuity of
$m_{X,Y}$.

\A
By \cite{PGPS1,PGPS2}, the density $k(H,X,Y)$ exists if and only if $\S_{X,Y}=a(e^X\,K\,e^Y)$,
the support of the measure $\mu_{X,Y}\big|_{\overline {\a^+}}$, has nonempty interior. Similarly, the density of the measure $m_{X,Y}$ exists if and only if its support $Ke^X Ke^Y K$ has nonempty interior as seen in \cite{PGPS_Lie2010}. These facts are crucial in the proofs of the results of this paper. 

We show in Corollary \ref{iff} that the result for the space ${\bf SO}_0(p,q)/{\bf SO}(p)\times {\bf SO}(q)$ also implies the result for the spaces ${\bf SU}(p,q)/{\bf S}({\bf U}(p)\times {\bf U}(q))$ and ${\bf Sp}(p,q)/{\bf Sp}(p)\times {\bf Sp}(q)$. We conclude the paper with two further applications of our main result. One of them is a characterization of an optimal convolution power $l$ of the measure $\delta_{e^X}^\natural$, which is absolutely continuous for any $X\not=0$, $X\in\a$. Theorem \ref{*power} solves on non-compact Grassmannians a problem raised by Ragozin in \cite{Ragozin}.
\B

\section{Basic properties}\label{root}

We start by reviewing some useful information on the Lie group
${\bf SO}_0(p,q)$, its Lie algebra $\so(p,q)$ and the corresponding
root system. Most of this material was given in \cite{Sawyer2}. 
For the convenience of the reader, we gather below the properties
we will need in the sequel.

In this paper, $E_{ij}$ is a rectangular matrix with 0's everywhere except at at the position $(i,j)$ where it is 1.

Recall that ${\bf SO}(p,q)$ is the group of matrices $g\in{\bf
SL}(p+q,\R)$ such that $g^T\,I_{p,q}\,g=I_{p,q}$
where 
$
I_{p,q}= \left [
\begin{array}{cc}
-I_p&0_{p\times q}\\ 
0_{q\times p}&I_q 
\end{array}
\right]$.
Unless otherwise specified, all $2\times 2$ block
decompositions in this paper follow the same pattern.

The group ${\bf SO}_0(p,q)$ is the connected component of ${\bf
SO}(p,q)$ containing the identity.
The Lie algebra $\so(p,q)$ of ${\bf SO}_0(p,q)$ consists of the
matrices 
\begin{align*}
\left [\begin{array}{cc}A&B\\ B^T&D
\end{array}\right]
\end{align*}
where $A$ and $D$ are skew-symmetric.

 A very important element in our investigations is the Cartan
decomposition of $\so(p,q)$ and ${\bf SO}(p,q)$.
The maximal compact subgroup $K$ is the subgroup of ${\bf SO}(p,q)$
consisting of the matrices
\begin{align*}
\left [\begin{array}{rr} 
A&0\\ 
0&D \end{array}\right]
\end{align*}
of size $(p+q)\times (p+q)$ such that $A\in
{\bf SO}(p)$ and $D\in {\bf SO}(q)$ (hence $K \simeq {\bf SO}(p)
\times {\bf SO}(q)$). If $\k$ is the Lie algebra of $K$ and $\p$ is
the set of matrices 
\begin{align*}
\left [\begin{array}{rr} 
0&B\\ 
B^T&0 \end{array}\right]
\end{align*}
then the Cartan decomposition is given by
$\so(p,q)=\k\oplus\p$ with corresponding Cartan involution
$\theta(X)=-X^T$.\\

The Cartan space $\a\subset \p$ is the set of matrices 
\begin{align*}H=\left
[\begin{array}{ccc}
0_{p\times p}&{\cal D}_H&0_{p\times (q-p)}\\ 
{\cal D}_H&0_{p\times p}&0_{p\times(q-p)}\\
0_{(q-p)\times p}&0_{(q-p)\times p}&0_{(q-p)\times (q-p)}
\end{array}\right]
\end{align*}
where ${\cal D}_H=\diag[H_1,\dots,H_p]$. Its canonical basis is given
by the matrices
\begin{align*}
A_i:=E_{i,p+i} + E_{p+i,i}, 1\leq i\leq p.
\end{align*} 

The restricted roots and associated root vectors for the Lie algebra
$\so(p,q)$ with respect to $\a$ are given in Table \ref{X}.
\begin{table}[h]
\begin{center}
\begin{tabular}{|c|c|c|}\hline
root $\alpha$&multiplicity&root vectors $X_\alpha$\\ \hline
$\alpha(H)=\pm H_i$&$q-p$&
$X_{ir}^\pm=E_{i\,2\,p+r}+E_{2\,p+r\,i}\pm(E_{p+i\,2p+r}-E_{2p+r\,p+i})$\\ 
$1\leq i\leq p$&&$r=1$, \dots, $q-p$\\ \hline
$\alpha(H)=\pm(H_i-H_j)$&1&
$Y_{ij}^\pm=\pm(E_{ij}-E_{ji}+E_{p+i\,p+j}-E_{p+j\,p+i})
+E_{i\,p+j}+E_{p+j\,i}$\\
$1\leq i,j\leq p$, $i< j$&&${}+E_{j\,p+i}+E_{p+i\,j}$\\ \hline
$\alpha(H)=\pm(H_i+H_j)$&1&
$Z_{ij}^\pm=\pm(E_{ij}-E_{ji}-E_{p+i\,p+j}+E_{p+j\,p+i})
-(E_{i\,p+j}+E_{p+j\,i})$\\ 
$1\leq i,j\leq p$, $i<j$&&${}+E_{j\,p+i}+E_{p+i\,j}$\\ \hline
\end{tabular}
\end{center}
\caption{Restricted roots and associated root vectors}\label{X}
\end{table}

The positive roots can be chosen as $\alpha(H)=H_i\pm H_j$, $1\leq
i<j\leq p$ and $\alpha(H)=H_i$, $i=1$, \dots, $p$. 
We therefore have the positive Weyl chamber
\begin{align*}
\a^+=\{H\in\a\colon ~H_1>H_2>\dots>H_p>0\}.
\end{align*}

The simple roots are given by $\alpha_i(H)=H_i-H_{i+1}$,
$i=1$, \dots, $p-1$ and $\alpha_p(H)=H_p$.

{\bf The action of the Weyl group.} The elements of the Weyl group $W$ act as permutations of the
 diagonal entries of ${\cal D}_X$ with eventual sign changes of any number of these
 entries.

The Lie algebra $\k$ is generated by the vectors $X_\alpha+ \theta X_\alpha$. We will use the notation
\begin{align*}
k^t_{X_\alpha}=e^{t(X_\alpha+ \theta X_\alpha)}.
\end{align*}

The linear space $\p$ has a basis formed by 
 $A_i\in\a$, $ 1\leq i\leq p$ and by the symmetric matrices
$X_\alpha^s:= \frac12( X_\alpha-\theta X_\alpha)$ which have the
 following form
\begin{align*}
X_{ir}:&= E_{i,2p+r}+E_{2p+r,i}; 1\leq i\leq p,\quad\quad 1\leq r\leq q-p; \\
Y_{ij}:&= E_{i,p+j}+ E_{j,p+i}+E_{p+j,i}+ E_{p+i,j},\quad 1\leq i<j\leq p; \\
Z_{ij}:&= E_{i,p+j}-E_{j,p+i}+E_{p+j,i} -E_{p+i,j},\quad 1\leq i<j\leq p.
 \end{align*}
If we followed the notation of \cite{PGPS_Lie2010}, 
we should write $(X_{ir}^+)^s$, etc.\ but we simplify the notation 
to $X_{ir}$, $Y_{ij}$ and $Z_{ij}$. 
If we write a matrix from the space $\p$ in the form
\begin{align*}
\left [\begin{array}{rr} 
0&B\\ 
B^T&0 \end{array}\right] = \left [\begin{array}{rrr} 
0&B_1&B_2\\ 
B_1^T&0&0\\
B_2^T&0&0\end{array}\right] 
\end{align*}
where $B_1$ is a square $p\times p$ matrix and $B_2$ is a $p\times
(q-p)$
matrix, then the matrices 
\begin{align*}
\left [\begin{array}{rrr} 
0&B_1& 0\\ 
B_1^T&0&0\\
0&0&0\end{array}\right]
\end{align*}
are generated by the vectors $A_i$ (for the diagonal entries of $B_1$
and $B_1^T$), $Y_{ij}$ and $Z_{ij}$ (for the non-diagonal entries),
whereas the matrices 
\begin{align*}
\left [\begin{array}{rrr} 
0&0&B_2\\ 
0&0&0\\
B_2^T&0&0\end{array}\right]
\end{align*}
are spanned by the vectors $X_{ir}$.

We now recall the useful matrix $S\in \SO(p+q)$ which allows us to
diagonalize simultaneously all the elements of $\a$.
Let
\begin{align*}
S=\left[\begin{array}{ccc}
\frac{\sqrt{2}}{2}\,I_p&0_{p\times (q-p)}&\frac{\sqrt{2}}{2}\,J_p\\
\frac{\sqrt{2}}{2}\,I_p&0_{p\times (q-p)}&-\frac{\sqrt{2}}{2}\,J_p\\
0_{(q-p)\times p}&I_{q-p}&0_{(q-p)\times p}
\end{array}
\right]
\end{align*}
where $J_p= (\delta_{i,p+1-i})$ is a matrix of size $p\times p.$
If $H=\left[\begin{array}{ccc}
0&{\cal D}_H&0\\
{\cal D}_H&0&0\\
0&0&0\end{array}\right]$ with ${\cal
D}_H=\diag[H_1,\dots,H_p]$ then 
$S^T\,H\,S=\diag[H_1,\dots,H_p,\overbrace{0,\dots,0}^{q-p},-H_p,
\dots,-H_1]$.

The ``group'' version of this result is as follows:
\begin{align*}
S^T\,e^H\,S=
\diag[e^{H_1},\dots,e^{H_p},\overbrace{1,\dots,1}^{q-p},e^{-H_p},
\dots,e^{-H_1}].
\end{align*}

\begin{remark}\label{S}
The Cartan projection $a(g)$ on the group ${\bf SO}_0(p,q)$,
defined as usual by
\begin{align*}
g=k_1 e^{a(g)} k_2,\ \ \ a(g)\in \overline{\a^+}, k_1,\ k_2\in K
\end{align*}
is related to the singular values of $g\in {\bf SO}(p,q)$ in the following way.
Recall that the singular values of $g$ are defined as the non-negative
square roots
of the eigenvalues of $g^Tg$. Let us write $H=a(g)$. We have

\begin{align*}
 g^T\,g &=
 k_2^T\,e^{2\,H}\,k_2 
= (k_2^T\,S)\,(S^T\,e^{2\,H}\,S)\,(S^T\,k_2)
\end{align*}
where $S^T\,e^{2\,H}\,S$ is a diagonal matrix with nonzero entries
$e^{2\,H_1}$, \dots, $e^{2\,H_p}$, $\overbrace{1,\dots, 1}^{q-p}$,
$e^{-2\,H_p}$, \dots, $e^{-2\,H_1}$. Let us write $a_j=e^{H_j}$.
Thus 
the set of $p+q$ singular values of $g$ contains the value 1
repeated $q-p$ times and the $2\,p$ values $a_1 $, \ldots, $a_p $,
$a_1^{-1}$, \ldots, $a_p^{-1}$.

Hence, in order to determine $a(g)$, we can compute the $p+q$ singular values 
of $g^Tg$ and omit $q-p$ values 1 always appearing among them. The 
$2p$ remaining singular values may be ordered
 $a_1\geq \ldots \geq a_p\geq a_p^{-1}\geq \ldots \geq a_1^{-1} $
 with $a_1\geq \ldots \geq a_p\geq 1$. Then
 \begin{align*} 
 a(g)=\left[\begin{array}{ccc}
 0&{\cal D}_{a(g)}&0\\
 {\cal D}_{a(g)}&0&0\\
 0&0&0
 \end{array}\right]
 \ \ {\rm with}\ \ 
 {\cal D}_{a(g)} =\diag[\log a_1,\dots,\log a_p]
 \end{align*}
 Summarizing, if for $g\in {\bf SO}(p,q)\subset {\bf SL}(p+q)$ the ${\bf SL}(p+q)$-Cartan decomposition writes $g=k_1 e^{\tilde a(g)} k_2$, $k_1,k_2\in {\bf SO}(p+q)$,
 then ${\cal D}_{a(g)} =\pi_p(\tilde a(g))$, where $\pi_p$ denotes the projection $\pi_p(\diag[h_1,\ldots,h_{p+q}])=\diag[h_1,\ldots,h_p]$.
\end{remark}

{\bf Singular elements of $\a$.} In what follows, we will consider
singular elements $X$, $Y\in \partial\a^+$. As
in \cite{PGPS_Lie2010}, we need to control the irregularity of $X$ and
$Y$, i.e. consider the simple positive roots annihilating $X$ and $Y$. A
special attention must be paid to the last simple root $\alpha_p$,
different from the roots $\alpha_i$, $i=1,\ldots,p-1$, that generate a
root subsystem of type $A_{p-1}$. We introduce the following
definition of the configuration of $X\in \overline{\a^+}$.
\begin{definition}
Let $X\in \overline{\a^+}$. 
There exist nonnegative integers $s_1\geq1$, \ldots, $s_r\geq1$, $u\geq0$ such that
\begin{align*}
{\cal D}_X = \diag [
\ \overbrace{x_{1},\dots,x_{1}}^{ {s}_{1}},
\overbrace{x_{2},\dots,x_{2}}^{ {s}_{2}},\dots,
\overbrace{x_{r},\dots,x_{r}}^{ {s}_{r}},
\overbrace {0,\dots, 0}^{ u}\ ]
\end{align*}
with $x_1>x_2> \ldots> x_r>0$ and $\sum s_i + u=p$. We say that
$[s_1,\dots,s_r;u]$ is the
{\bf configuration} of $X$. Writing ${\bf s}=(s_1,\ldots, s_r)$,
we will shorten the notation of the configuration of $X$ to $[{\bf
s};u]$. We will also write $X=X[{\bf s};u]$.
\end{definition}

 Note that $X=0$ is equivalent to $u=p$
and has configuration $[0;p]$. A regular $X\in \a^+$ has the configuration 
$[{\bf 1};0]=[1^p;0]$.
We extend naturally the definition of configuration to any
$X\in\a$, whose configuration is defined as that of the projection
$\pi(X)$
of $X$ on $\overline{\a^+}$.

In what follows, we will write $\max {\bf s}=\max_i s_i$
and $\max({\bf s}, u)= \max(\max {\bf s},u)$ . 
 We will show that in the case of the symmetric spaces
 $\SO_0(p,q)/{\bf SO}(p)\times {\bf SO}(q)$, $q>p$, the criterion for
 the existence of the density of the convolution
 $\delta_{e^X}^\natural \star \delta_{e^Y}^\natural$ is given by the
following definition of an eligible pair $X$ and $Y$:
\begin{definition}\label{defEligible}
Let $X=X[{\bf s};u]$ and $Y=Y[{\bf t};v]$ be two elements of $\a$. We say
that $X$ and $Y$ are eligible if 
\begin{align*}
\max({\bf s}, 2u) + \max({\bf t}, 2v) \le 2p.
\end{align*}
\end{definition}
Observe that if $X$ and $Y$ are eligible, then $X\not= 0$ and $Y\not= 0$.

\section{Necessity of the eligibility condition}
In the proof of the necessity of the eligibility condition 
we will use the result stated in \cite[Step 1, page 1767]{PGPS_Fun2010}:

\begin{lemma}\label{repeat}
Let $U=\diag([\overbrace{u_0,\dots,u_0}^r,u_1,\dots,u_{N-r}]$
and $V=\diag([\overbrace{v_0,\dots,v_0}^{N-s},v_1,\dots,v_s]$. where
$s+1\leq r<N$, $s\geq 1$, and the
$u_i$'s and $v_j$'s are arbitrary. 
Then each element of 
 $ \tilde
a(e^U\,\SU(N,\F)\,e^V)
 $
has at least $r-s$ entries equal to $u_0+v_0$.
\end{lemma}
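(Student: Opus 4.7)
The plan is to reformulate the Cartan-projection statement as a statement about the multiplicity of a specific singular value, and then to extract that multiplicity from a dimension count on the $e^{u_0}$- and $e^{v_0}$-eigenspaces of $e^U$ and $e^V$.

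I would first recall that for $g \in \SL(N,\F)$, the entries of $\tilde a(g)$ are precisely the logarithms of the singular values of $g$ (equivalently, of the eigenvalues of $\sqrt{g^*g}$, which makes sense over $\R$, $\C$, or $\H$). Thus it suffices to show that every matrix $M = e^U\,k\,e^V$ with $k \in \SU(N,\F)$ admits $e^{u_0+v_0}$ as a singular value of multiplicity at least $r - s$.

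Set $A = e^U$ and $B = e^V$, and let $V_A \subset \F^N$ be the $e^{u_0}$-eigenspace of $A$ (dimension $r$, spanned by the first $r$ standard basis vectors) and $V_B$ the $e^{v_0}$-eigenspace of $B$ (dimension $N-s$). The key observation is purely dimensional: $k(V_B)$ and $V_A$ are subspaces of $\F^N$ whose dimensions sum to $(N-s) + r > N$, so $\dim(k(V_B) \cap V_A) \geq r - s$. Setting $Z = k^{-1}(k(V_B) \cap V_A) \subset V_B$, I obtain a subspace of $V_B$ of dimension at least $r - s$ on which everything collapses: for $z \in Z$ one has $Bz = e^{v_0}z$ and $kz \in V_A$, so $Mz = e^{u_0+v_0}\,kz$, and then using $A^* = A$, $B^* = B$ and $k^* = k^{-1}$ one finds $M^*Mz = e^{2(u_0+v_0)}z$. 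Hence $Z$ lies in the $e^{2(u_0+v_0)}$-eigenspace of $M^*M$, producing at least $r - s$ singular values of $M$ equal to $e^{u_0+v_0}$.

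The argument is pure linear algebra; the main tool is the dimension formula $\dim(U \cap V) \geq \dim U + \dim V - N$. The only subtlety I see is checking that this formula and the singular-value interpretation of $\tilde a$ carry over to the quaternionic case $\F = \H$, which they do since all computations take place inside finite-dimensional left $\H$-modules and the polar decomposition of $\SL(N,\H)$ takes the same form as over $\R$ or $\C$.
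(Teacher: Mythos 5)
Your proof is correct. Note that the paper does not itself prove Lemma~\ref{repeat}: it is quoted verbatim from \cite{PGPS_Fun2010} (``Step 1, page 1767''), so there is no in-paper argument to compare against; but the argument you give is exactly the natural one behind such a statement and is complete. You interpret the entries of $\tilde a(g)$ as logarithms of singular values (which is what the $\SL(N,\F)$-Cartan projection records), exhibit the subspace $Z=k^{-1}\bigl(k(V_B)\cap V_A\bigr)$ of dimension at least $r+(N-s)-N=r-s$ using the hypothesis $r\ge s+1$, and verify directly that $M^*Mz=Bk^*A\cdot e^{u_0+v_0}kz=e^{2(u_0+v_0)}z$ for $z\in Z$ via $Bz=e^{v_0}z$, $A(kz)=e^{u_0}kz$ and $k^*k=I$; this forces $e^{u_0+v_0}$ to be a singular value of $M=e^Uke^V$ with multiplicity at least $r-s$. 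The two quaternionic caveats you raise resolve as you indicate: the dimension formula $\dim(W_1\cap W_2)\ge\dim W_1+\dim W_2-N$ holds for submodules of $\F^N$ over any division ring, and $M^*M$ is a positive $\H$-Hermitian matrix whose eigenvalues are real (and central), so its $e^{2(u_0+v_0)}$-eigenspace is a well-defined $\H$-submodule containing $Z$.
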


We will use Lemma \ref{repeat} with $N=p+q$ in the proofs of Proposition \ref{Necessary} and Theorem
\ref{*power}.

\begin{proposition}\label{Necessary}
If $X[{\bf s};u]$ and $Y[{\bf t},v]$ are not eligible then 
the measure $\mu_{X,Y}$ is not absolutely continuous with respect to
the Lebesgue measure on $\a$.
\end{proposition}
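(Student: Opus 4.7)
Set $\sigma = \max({\bf s}, 2u)$ and $\tau = \max({\bf t}, 2v)$, so non-eligibility means $\sigma + \tau > 2p$. The plan is to use the characterization from \cite{PGPS1, PGPS2} recalled just after (\ref{kernel}) --- absolute continuity of $\mu_{X, Y}$ is equivalent to $\S_{X, Y} = a(e^X K e^Y)$ having nonempty interior in $\overline{\a^+}$ --- and to show that non-eligibility confines $\S_{X, Y}$ to a finite union of hyperplanes in $\a$.

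The main tool will be Lemma \ref{repeat}, applied in $\SL(p+q, \R)$ after the diagonalization of Remark \ref{S}: $\tilde X = S^T X S$ has entries $x_i$ with multiplicity $s_i$ and $0$ with multiplicity $2u + (q-p)$, and similarly for $\tilde Y$. Since $K = \SO(p) \times \SO(q) \subset \SO(p+q)$ and the $\SL(p+q,\R)$-Cartan projection $\tilde a$ is $\SO(p+q)$-bi-invariant, one has $\tilde a(e^X K e^Y) \subset \tilde a(e^{\tilde X} \SO(p+q) e^{\tilde Y})$, so Lemma \ref{repeat} with $N = p+q$ forces coincidences among the entries of $\tilde a(g)$ for every $g \in e^X K e^Y$.

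The case $\sigma = s_i$ and $\tau = t_j$ is vacuous: since $s_i, t_j \leq p$ it would give $s_i + t_j \leq 2p$, contradicting non-eligibility. So, after possibly swapping $X$ and $Y$, I may assume $\tau = 2v$. If $\sigma = s_i$ with $x_i > 0$, I apply Lemma \ref{repeat} with $u_0 = x_i$ and $v_0 = 0$: the lemma's hypothesis $m_1 + m_2 > N$ reduces precisely to the non-eligibility condition $s_i + 2v > 2p$, and the conclusion yields at least $s_i + 2v - 2p \geq 1$ entries of $\tilde a(g)$ equal to $x_i$; since the only positive entries of $\tilde a(g) = (h_1, \dots, h_p, 0^{q-p}, -h_p, \dots, -h_1)$ lie in the first $p$ coordinates, this forces $h_k = x_i$ for some $k$. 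If instead $\sigma = 2u$, I apply the lemma with $u_0 = v_0 = 0$: the multiplicity of $0$ in $\tilde a(g)$ is forced to be $\geq 2u + 2v + q - 3p$, exceeding the automatic $q - p$ zeros by at least $2$; since each index $k$ with $h_k = 0$ contributes two extra zeros (at positions $k$ and $p+q+1-k$), at least $u + v - p \geq 1$ of the $h_k$'s must vanish.

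In every case $\S_{X, Y}$ is contained in a finite union of hyperplanes in $\overline{\a^+}$, so has empty interior. The main subtlety is the translation from the $\SL(p+q, \R)$-level repetition conclusion of Lemma \ref{repeat} into a genuine constraint on the $\SO_0(p, q)$-Cartan coordinates $(h_1, \dots, h_p)$; this relies on the symmetric structure of $\tilde a(g)$ for $g \in \SO_0(p, q)$ and, in the last case, on careful bookkeeping of the paired zeros.
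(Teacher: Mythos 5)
Your proof is correct and takes essentially the same approach as the paper: both apply Lemma \ref{repeat} with $N=p+q$ after conjugating by $S$, use Remark \ref{S} to translate the resulting repetitions in $\tilde a(g)$ into constraints on ${\cal D}_{a(g)}$, and split into the same two cases (zeros-meet-zeros and nonzero-block-meets-zeros), observing that both maxima coming from the ${\bf s}$ and ${\bf t}$ blocks is impossible. The only difference is cosmetic --- you normalize $\tau=2v$ where the paper implicitly fixes $\sigma=2u$ --- and you spell out more explicitly the bookkeeping that converts extra zeros of $\tilde a(g)$, which come in pairs at positions $k$ and $p+q+1-k$, into the bound $u+v-p$ on the number of vanishing $H_k$, a point the paper treats tersely.
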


\begin{proof}
Suppose 
$\max({\bf s},2\,u)+\max({\bf t},2\,v)> 2\,p$ and consider the matrices $a(e^X\,k\,e^Y)$, $k\in {\bf SO}(p)\times {\bf SO}(q)$. 
Applying Remark \ref{S}, the diagonal $p\times p$ matrix ${\cal D}_{a(e^X\,k\,e^Y)}$ contains the $p$ biggest
diagonal terms of the matrix
\begin{align*}
\tilde a(e^Xke^Y)=\tilde a(
\overbrace{(S^T\,e^{X}\,S)}^{e^{S^T\,X\,S}}
\,\overbrace{(S^T\,k\,S)}^{\in\SO(p+q)}\,
\overbrace{(S^T\,e^{Y}\,S)}^{e^{S^T\,Y\,S}})
\end{align*}

 If $u+v>p$ then there are 
$r-s=r+(N-s)-N=(2\,u+q-p)+(2\,v+q-p)-(p+q)=2\,(u+v-p)+(q-p)$
repetitions of $0+0=0$ in coefficients of $ \tilde a(e^X k e^Y)$. Therefore 0 occurs at least $u+v-p>0$ times as a diagonal entry of ${\cal D}_H$ for every $H\in a(e^X\,K\,e^Y)$ which implies that $a(e^X\,K\,e^Y)$ has empty
interior.

 If $2\,u+\max({\bf t})>2\,p$ denote $t=\max({\bf t})$. Let $Y_i\not=0$ be
 repeated $t$ times in ${\cal D}_Y$. Then there are 
$r-s=r+(N-s)-N=(2\,u+q-p)+t-(p+q)=2\,u+t-2p$ repetitions of $Y_i+0$ in coefficients of $ \tilde a(e^X k e^Y)$. Therefore $Y_i$ occurs at least $2\,u+t-2p>0$ times as a diagonal entry of ${\cal D}_H$ for every $H\in a(e^X\,K\,e^Y)$ 
which implies that $a(e^X\,K\,e^Y)$ has empty interior. 
\end{proof}

\section{Sufficiency of the eligibility condition}

We use basic ideas and some results and notations of \cite[Section
 3]{PGPS_Lie2010}.

 \begin{proposition}
 (i) The
density of the measure $m_{X,Y}$ exists if and only if its support
$Ke^X Ke^Y K$ has nonempty interior.\\
(ii) Consider the analytic map $T\colon K \times K \times K \to\SO_0(p,q)$
defined by
\begin{align*}
T(k_1,k_2,k_3)=k_1\,e^X\,k_2\,e^Y\,k_3.
\end{align*}
 If
 the derivative of $T$ is
surjective for some choice of ${\bf k}=(k_1,k_2,k_3)$, then 
 the set $T(K\times K\times K)=Ke^X Ke^Y K$ contains an open set.
 \end{proposition}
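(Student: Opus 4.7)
The plan is to treat the two parts separately: part (i) is the characterization of absolute continuity by nonempty interior of the support, established in \cite{PGPS_Lie2010}, while part (ii) is the differential-geometric criterion that will be used in later sections to verify that hypothesis. For part (i), the forward implication is soft: if $m_{X,Y}=f\,dg$ on $G$, then the support carries total mass one and hence has positive Haar measure; but $Ke^XKe^YK=T(K\times K\times K)$ is the image of the compact real-analytic manifold $K\times K\times K$ under the real-analytic map $T$, hence a compact subanalytic subset of $\SO_0(p,q)$, and any such set of positive Haar measure necessarily has nonempty interior (otherwise its stratification would express it as a finite union of real-analytic submanifolds of strictly lower dimension, all of Haar measure zero). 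For the converse, if $Ke^XKe^YK$ has nonempty interior, Sard's theorem supplies some $\mathbf{k}\in K\times K\times K$ at which $dT_{\mathbf{k}}$ is surjective; the local submersion structure combined with the $K$-biinvariance of both $m_K^{\otimes 3}$ and $dg$ then propagates local absolute continuity of the pushforward $T_{\ast}(m_K^{\otimes 3})=m_{X,Y}$ to global absolute continuity with respect to $dg$.

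For part (ii), the claim is a direct application of the submersion theorem. Surjectivity of $dT_{\mathbf{k}}$ at some $\mathbf{k}\in K\times K\times K$ means that $T$ has maximal rank $\dim\SO_0(p,q)$ at $\mathbf{k}$ and is therefore a submersion there. The open-mapping form of the constant-rank theorem then guarantees that an open neighborhood of $\mathbf{k}$ in $K\times K\times K$ is sent by $T$ onto an open neighborhood of $T(\mathbf{k})$ in $\SO_0(p,q)$, so $T(K\times K\times K)=Ke^XKe^YK$ contains an open set, as required.

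The main obstacle is really confined to the reverse direction of (i): passing from local absolute continuity (guaranteed by the submersion theorem at a regular point) to global absolute continuity of $m_{X,Y}$ on $G$ requires carefully combining Sard's theorem with the $K$-biinvariance of the measures involved. Since \cite{PGPS_Lie2010} already executes precisely this argument in the same generality, the proof in this paper can reduce to a one-line reference, and the genuinely new content lies entirely in part (ii). As a single application of the submersion theorem, part (ii) presents no further difficulty; its importance is that it feeds directly into the computation of the differential of $T$ carried out in the subsequent sections, where the eligibility condition will be shown to force $dT_{\mathbf{k}}$ to be surjective at a suitable $\mathbf{k}$.
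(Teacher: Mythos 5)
Part (ii) matches the paper's approach: both are a direct appeal to the submersion theorem (the paper cites Helgason, p.\ 479). Your forward direction of (i) via stratification of the compact analytic image $T(K^3)$ is also sound: such an image is either of Haar measure zero or has nonempty interior, with no middle ground.

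The reverse direction of (i) is not, however, established by the mechanism you name. After Sard produces a regular $\mathbf{k}_0$, you invoke ``$K$-biinvariance'' to propagate local absolute continuity globally, but $T$ only intertwines the $K\times K$-action $(a,f)\cdot(k_1,k_2,k_3)=(ak_1,k_2,k_3f)$ on $K^3$ with left-right translation on $G$, and that action sweeps $T(\mathbf{k}_0)$ only through its own double coset $KT(\mathbf{k}_0)K$. The support $Ke^XKe^YK$ consists of many double cosets, and biinvariance gives no control over the ones away from $KT(\mathbf{k}_0)K$. The step that actually closes the argument is real-analyticity: once one regular point exists, the critical locus of $T$ is a \emph{proper} closed real-analytic subset of $K^3$ and hence has Haar measure zero in $K^3$; $T_*(m_K^{\otimes 3})$ is then absolutely continuous because the submersion theorem applies over a countable cover of the full-measure complement, and the null critical set carries none of the mass. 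You are right that the paper reduces part (i) to a citation of \cite{PGPS2} — it does exactly that, in two lines — but the internal sketch you give of that citation's mechanism should be corrected from a biinvariance argument to the analytic-dichotomy argument.
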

 \begin{proof}
 Part (i) follows from arguments explained in \cite{PGPS2} in the case
 of the support of the measure $\mu_{X,Y}$, equal to $a(e^X\,Ke^Y).$
 Part (ii) is justified for example in \cite[p. 479]{Helga}. 
 \end{proof}
 
 \begin {proposition}\label{U}
 Let $U_Z=\k+\Ad(Z)\k$. If there exists $k\in K$ such that 
 \begin {equation}\label{UXY}
 U_{-X}+\Ad(k)U_Y=\g
 \end {equation}
 then the measure $m_{X,Y}$ is absolutely continuous.
 \end{proposition}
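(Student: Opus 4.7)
The plan is to apply part (ii) of the preceding proposition: show that under the hypothesis, the derivative $dT$ is surjective at a suitably chosen point $(k_1,k_2,k_3)\in K\times K\times K$, so that $T(K\times K\times K)=Ke^XKe^YK$ contains an open set; then part (i) yields absolute continuity of $m_{X,Y}$.

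The natural choice is $(k_1,k_2,k_3)=(e,k,e)$, with $k$ given by the hypothesis. First I would compute the differential by plugging in curves $(e^{tZ_1},k\,e^{tZ_2},e^{tZ_3})$ with $Z_1,Z_2,Z_3\in\k$, differentiating at $t=0$, and then right-translating the result by $T(e,k,e)^{-1}=e^{-Y}k^{-1}e^{-X}$ to land in $\g$. A direct calculation gives
\begin{align*}
(dT)\cdot T(e,k,e)^{-1}(Z_1,Z_2,Z_3)=Z_1+\Ad(e^X k)Z_2+\Ad(e^Xke^Y)Z_3,
\end{align*}
so the image of $dT$, identified with a subspace of $\g$, is
\begin{align*}
\k+\Ad(e^Xk)\,\k+\Ad(e^Xke^Y)\,\k.
\end{align*}

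Next, since $\Ad(e^{-X})$ is a linear automorphism of $\g$, surjectivity of $dT$ is equivalent to surjectivity of the subspace obtained by applying $\Ad(e^{-X})$, which is
\begin{align*}
\Ad(e^{-X})\,\k+\Ad(k)\,\k+\Ad(ke^Y)\,\k.
\end{align*}
Using $\Ad(k)\,\k=\k$ (as $k\in K$), I can freely add $\k$ to this subspace without changing it, obtaining
\begin{align*}
\bigl(\k+\Ad(e^{-X})\,\k\bigr)+\Ad(k)\bigl(\k+\Ad(e^Y)\,\k\bigr)=U_{-X}+\Ad(k)\,U_Y,
\end{align*}
where I read $U_Z=\k+\Ad(e^Z)\,\k$. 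By the hypothesis this equals $\g$, so $dT$ is surjective at $(e,k,e)$, which by the preceding proposition finishes the proof.

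The only step requiring care is the bookkeeping for the differential and the right translation; once those are written down correctly, the identification with $U_{-X}+\Ad(k)U_Y$ is immediate because $\Ad(k)$ stabilizes $\k$. No deep fact beyond the previous proposition is used, so I expect no real obstacle beyond checking conventions for the definition of $U_Z$.
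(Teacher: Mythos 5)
Your proof is correct and follows essentially the same line as the paper's: compute the derivative of $T$, conjugate the image by $\Ad(e^{-X})$ (the paper does an equivalent series of left/right translations by invertible elements), and identify the result as $U_{-X}+\Ad(k)U_Y$ using $\Ad(k)\k=\k$. You also correctly note that the statement's $U_Z=\k+\Ad(Z)\k$ should read $U_Z=\k+\Ad(e^Z)\k$, consistent with how the paper uses it in the proof.
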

\begin{proof}
 We want to show that the derivative of $T$ is
surjective for some choice of ${\bf k}=(k_1,k_2,k_3)$. 

Let $A,B,C\in\k$. The derivative of $T$ at ${\bf k}$ in the
 direction of
 $(A,B,C)$ equals
\begin{eqnarray}
dT_{{\bf k}}(A,B,C)&=&
\frac{d}{dt}\big|_{t=0}\ e^{tA}k_1\,e^X\,e^{tB}k_2\,e^Y\,e^{tC}k_3
\nonumber\\
&=&A\,k_1\,e^X\,k_2\,e^Y\,k_3
+k_1\,e^X\,B\,k_2\,e^Y\,k_3
+k_1\,e^X\,k_2\,e^Y\,C\,k_3\label{A}
\end{eqnarray}
We now transform the space of all matrices of the form (\ref{A})
without modifying its dimension: 
\begin{align*}
&\dim\{A\,k_1\,e^X\,k_2\,e^Y\,k_3
+k_1\,e^X\,B\,k_2\,e^Y\,k_3
+k_1\,e^X\,k_2\,e^Y\,C\,k_3\colon A,B,C \in\k\}\\
=&\, \dim
\{k_1^{-1}\,A\,k_1\,e^X\,k_2\,e^Y
+e^X\,B\,k_2\,e^Y
+e^X\,k_2\,e^Y\,C\colon A,B,C \in\k\}\\
=&\, \dim
\{A\,e^X\,k_2\,e^Y
+e^X\,B\,k_2\,e^Y
+e^X\,k_2\,e^Y\,C\colon A,B,C \in\k\}\\
=&\, \dim
\{e^{-X}\,A\,e^X+B+k_2\,e^Y\,C\,e^{-Y}k_2^{-1}\colon A,B,C
\in\k\}
 \end{align*}
 The space in the last line equals 
$\k+\Ad(e^{-X})(\k)+\Ad(k_2)\,(\Ad(e^Y)(\k)) = U_{-X}+\Ad(k_2)U_Y$.
\end{proof}

 In order to apply the condition (\ref{UXY}), we will consider 
 convenient root vectors and their 
 symmetrizations.
For $Z\in\a$, we define the space 
\begin{align*}
V_Z={\rm span}\{ X_\alpha^s\ |\ \alpha(Z)\not=0\},
\end{align*}
where $X_\alpha^s=X_\alpha-\theta X_\alpha$. Note that this space would be called $V_Z^S$ in the notation of 
\cite{PGPS_Lie2010}.
\begin{lemma}\label{3.2[7]}
 Let $Z\in\a$. The vector space $U_Z=\k+\Ad(e^Z)(\k)$ contains the root
vectors $X_\alpha$ for which $\alpha(Z)\not=0$. \A Consequently, $V_Z=V_{-Z}\subset U_{\pm Z}$.\B
\end{lemma}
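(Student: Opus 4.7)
The plan is to exploit the fact that $\k$ is generated by the symmetrized root vectors $K_\alpha := X_\alpha + \theta X_\alpha$, and then extract the unsymmetrized root vector $X_\alpha$ by combining $K_\alpha$ with $\Ad(e^Z)K_\alpha$.

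First I would recall that $X_\alpha$ is an $\ad(\a)$-eigenvector with $\ad(Z)X_\alpha=\alpha(Z)X_\alpha$, so $\Ad(e^Z)X_\alpha=e^{\alpha(Z)}X_\alpha$, and since $\theta X_\alpha$ lies in the root space for $-\alpha$, $\Ad(e^Z)(\theta X_\alpha)=e^{-\alpha(Z)}\theta X_\alpha$. Consequently,
\begin{align*}
\Ad(e^Z)K_\alpha = e^{\alpha(Z)}X_\alpha + e^{-\alpha(Z)}\theta X_\alpha,
\end{align*}
which lies in $\Ad(e^Z)\k\subset U_Z$, while $K_\alpha\in\k\subset U_Z$.

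Next, I would form the linear combination
\begin{align*}
\Ad(e^Z)K_\alpha - e^{-\alpha(Z)}K_\alpha = \bigl(e^{\alpha(Z)}-e^{-\alpha(Z)}\bigr)X_\alpha,
\end{align*}
which still belongs to $U_Z$. The crucial point is that when $\alpha(Z)\neq 0$, the scalar $e^{\alpha(Z)}-e^{-\alpha(Z)}$ is nonzero, so we may divide and conclude $X_\alpha\in U_Z$. Then $\theta X_\alpha = K_\alpha - X_\alpha\in U_Z$ as well, hence $X_\alpha^s=X_\alpha-\theta X_\alpha\in U_Z$. Taking the span over all $\alpha$ with $\alpha(Z)\neq 0$ yields $V_Z\subset U_Z$.

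Finally, for the parenthetical assertion: $\alpha(Z)\neq 0$ if and only if $\alpha(-Z)=-\alpha(Z)\neq 0$, so the generating set of root vectors defining $V_Z$ and $V_{-Z}$ coincides, giving $V_Z=V_{-Z}$. Applying the first part with $-Z$ in place of $Z$ yields $V_{-Z}\subset U_{-Z}$, and combined with $V_Z=V_{-Z}$ we obtain $V_Z\subset U_{\pm Z}$. I do not foresee a genuine obstacle here — the argument is a one-line algebraic manipulation — the only thing to be careful about is keeping track of the normalization convention for $X_\alpha^s$, but since both $X_\alpha$ and $\theta X_\alpha$ individually land in $U_Z$, any linear combination (with either the $\frac12$ factor from the earlier definition or without it as in the lemma statement) is automatically in $U_Z$.
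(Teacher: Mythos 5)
Your proof is correct and follows essentially the same route as the paper: both compute $\Ad(e^Z)(X_\alpha+\theta X_\alpha)=e^{\alpha(Z)}X_\alpha+e^{-\alpha(Z)}\theta X_\alpha$ and then isolate $X_\alpha$ by the linear combination $\Ad(e^Z)K_\alpha-e^{-\alpha(Z)}K_\alpha=(e^{\alpha(Z)}-e^{-\alpha(Z)})X_\alpha$, using $\alpha(Z)\neq0$ to divide. The only cosmetic difference is that the paper verifies $\Ad(e^Z)X_\alpha=e^{\alpha(Z)}X_\alpha$ by expanding the $e^{\ad Z}$ power series, whereas you invoke it directly from the eigenvector property.
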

\begin{proof}
\A Suppose $\alpha$ is a root such that $\alpha(Z)\not=0$.\B  Note that
$[Z,X_\alpha]=\alpha(Z)\,X_\alpha$ and
$[Z,\theta(X_\alpha)]=-\alpha(Z)\,\theta(X_\alpha)$ . Let $U=X_\alpha+\theta(X_\alpha)\in\k$. Now,
\begin{align*}
\Ad(e^Z)\, U
&=e^{\ad Z}\,(X_\alpha+\theta(X_\alpha))
=\sum_{k=0}^\infty\,\frac{(\ad Z)^k}{k!}\,(X_\alpha+\theta(X_\alpha))
\\
&=\sum_{k=0}^\infty\,\frac{(\ad Z)^k}{k!}\,X_\alpha
+\sum_{k=0}^\infty\,\frac{(\ad Z)^k}{k!}\,\theta(X_\alpha) \\
&=\sum_{k=0}^\infty\,\frac{(\alpha(Z))^k}{k!}\,X_\alpha
+\sum_{k=0}^\infty\,\frac{(-1)^k\,(\alpha(Z))^k}{k!}\,\theta(X_\alpha)
\\
&=e^{\alpha(Z)}\,X_\alpha+e^{-\alpha(Z)}\,\theta(X_\alpha).
\end{align*}
Therefore $X_\alpha=(e^{\alpha(Z)}-e^{-\alpha(Z)})^{-1}\left(-e^{-\alpha(Z)}\, U
+ \Ad(e^Z)\, U\right)\in \k+\Ad(e^Z)(\k)=U_Z$. The vector $\theta X_\alpha$ is a root vector for the root $-\alpha$, so we also have $\theta X_\alpha\in U_Z$. 
\end{proof}
\A
\begin{proposition}\label{VXVY}
If there exists $k\in K$ such that
\begin{equation}\label{conditionSym}
V_X+\Ad(k)\,V_Y= \p
\end{equation}
then the measure $m_{X,Y}$ is absolutely continuous.
\end{proposition}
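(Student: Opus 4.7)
The plan is to show that hypothesis (\ref{conditionSym}), which lives at the level of $\p$, already forces the $\g$-level condition (\ref{UXY}) of Proposition \ref{U}; the absolute continuity of $m_{X,Y}$ will then follow immediately. Two elementary observations do all the work: that the $\k$-component of $U_{-X} + \Ad(k)\,U_Y$ is automatically full, and that the hypothesis controls the $\p$-component via Lemma \ref{3.2[7]}.

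For the $\k$-component, I note that by the very definition of $U_Z$ one has $\k \subset U_Z$ for every $Z \in \a$; in particular $\k \subset U_{-X}$. Moreover, since $k \in K$, the adjoint action preserves $\k$, so $\Ad(k)\,\k = \k \subset \Ad(k)\,U_Y$. Hence $\k \subset U_{-X} + \Ad(k)\,U_Y$. For the $\p$-component, Lemma \ref{3.2[7]} gives $V_X = V_{-X} \subset U_{-X}$ and $V_Y \subset U_Y$, so applying $\Ad(k)$ we get $\Ad(k)\,V_Y \subset \Ad(k)\,U_Y$. Combined with the assumption $V_X + \Ad(k)\,V_Y = \p$, this yields
\begin{align*}
U_{-X} + \Ad(k)\,U_Y \supset V_X + \Ad(k)\,V_Y = \p.
\end{align*}

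Assembling both inclusions gives $U_{-X} + \Ad(k)\,U_Y \supset \k + \p = \g$, which is precisely equation (\ref{UXY}). Proposition \ref{U} then delivers the absolute continuity of $m_{X,Y}$. There is no substantive obstacle here: the argument is a direct splice of Proposition \ref{U} with Lemma \ref{3.2[7]}, and the only idea worth naming is that the $\k$-part of the $U$'s is tautologically full, so surjectivity onto $\g$ reduces to the $\p$-level surjectivity that the $V$'s encode.
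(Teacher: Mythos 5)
Your argument is correct and is exactly the approach taken in the paper: use Lemma \ref{3.2[7]} to embed $V_X=V_{-X}\subset U_{-X}$ and $V_Y\subset U_Y$, observe that $\k$ is automatically contained in each $U_Z$ so the $\k$-component is free, and conclude that (\ref{conditionSym}) forces (\ref{UXY}), whence Proposition \ref{U} finishes the job. Your write-up is a bit more explicit about the $\k$-component and about pushing the inclusions through $\Ad(k)$, but the content is identical.
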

\begin{proof}
We want to prove formula (\ref{UXY}). By Lemma \ref{3.2[7]}, we know that 
$V_X=V_{-X}\subset U_{-X}$ and $V_Y\subset U_Y$.
As $\k\subset U_X$, we see that the formula (\ref{conditionSym}) implies (\ref{UXY}).
\end{proof}
\B

Later in this section,(Theorem \ref{MainReduced}), we will show that the hypotheses of the last Proposition are always satisfied for $X$ and $Y$ eligible. For technical reasons, in order to make an induction proof work, we will show more, i.e. that a ``better'' matrix $k\in K$
exists such that the formula (\ref{conditionSym}) holds. The meaning of a ``better'' $k$ will be similar to the notion of a total matrix given in Definition \ref{Total}. Here is a definition and a lemma about total matrices in $K$. The reasonning of the proof of this lemma will be used in a more general setting in Steps 2 and 3 of the proof of Theorem \ref{MainReduced}. 

\begin{definition}\label{Total}
We say that a square $n\times n$ matrix $k$ is total if by removing any
$r<n$ rows and $r$ columns of $k$ we always obtain a nonsingular matrix.
\end{definition}

Note that this definition of totality is more restrictive than in
\cite[Definition 3.7]{PGPS_Lie2010}.

\begin{lemma}\label{total}
The set of matrices in $\SO(n)$ which are total is dense and open in
$\SO(n)$.
\end{lemma}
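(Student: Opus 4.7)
The plan is to reformulate totality as the non-vanishing of finitely many minors and then exploit the fact that $\SO(n)$ is a connected real analytic manifold. For $r\in\{0,\ldots,n-1\}$ and subsets $I,J\subseteq\{1,\ldots,n\}$ with $|I|=|J|=r$, let $m_{I,J}(k)=\det(k_{I^c,J^c})$ denote the corresponding minor of $k$. Then $k$ is total if and only if $m_{I,J}(k)\neq 0$ for every such pair $(I,J)$. The case $r=0$ is automatic on $\SO(n)$, since there $m_{\emptyset,\emptyset}(k)=\det k=1$.

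Openness follows immediately: each $m_{I,J}$ is a polynomial in the entries of $k$ and hence continuous, so the set of total matrices in $\SO(n)$ is the finite intersection
\begin{align*}
\bigcap_{0\leq r<n}\,\bigcap_{|I|=|J|=r}\,\{k\in\SO(n)\colon m_{I,J}(k)\neq 0\}
\end{align*}
of open sets, and is therefore open in $\SO(n)$.

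For density, I would use that each $m_{I,J}$ is real analytic on the connected real analytic manifold $\SO(n)$; by the identity principle, its zero set is either all of $\SO(n)$ or a proper closed subset with empty interior. The complement of the set of total matrices is a finite union of such zero sets, so density reduces to showing that no single $m_{I,J}$ vanishes identically on $\SO(n)$. To this end I plan to exhibit one explicit $k\in\SO(n)$ for each pair $(I,J)$ with $m_{I,J}(k)\neq 0$. A natural candidate is the permutation matrix $P$ associated to any bijection $\phi\colon\{1,\ldots,n\}\to\{1,\ldots,n\}$ with $\phi(J^c)=I^c$: by construction $P_{I^c,J^c}$ is itself a permutation matrix, so $m_{I,J}(P)=\pm 1$. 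Such a $P$ lies in $O(n)$; when $\det P=-1$, I would replace $P$ by the matrix obtained by swapping two of its rows. Swapping two rows indexed by $I$ (available when $|I|\geq 2$) leaves $P_{I^c,J^c}$ unchanged, while swapping two rows indexed by $I^c$ (available when $|I^c|\geq 2$, i.e. $n\geq 3$ in the case $|I|=1$) changes $m_{I,J}$ by a sign and hence preserves non-vanishing. The residual small case $n=2$, $r=1$ is dispatched by taking $k$ to be a rotation through a generic angle, for which every entry is non-zero.

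The main obstacle is the bookkeeping in the sign adjustment, which is elementary but has to be verified in each configuration of $(|I|,|I^c|)$. Once each $m_{I,J}$ is seen to be non-identically zero on $\SO(n)$, the density claim follows from the analyticity-and-connectedness argument, and openness is essentially formal.
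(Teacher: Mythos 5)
Your proof is correct and follows essentially the same route as the paper: both reduce totality to the non-vanishing of finitely many minors, exhibit permutation-type witnesses in $\SO(n)$, and conclude that the complement is a finite union of nowhere-dense zero sets of nontrivial polynomials. Your treatment of the $\det=-1$ sign adjustment is a bit more explicit than the paper's ``take convenient permutations,'' but the argument is the same.
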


\begin{proof}
Consider first the set $M_{I,J}=
M_{\{i_1,\dots,i_j\},\{j_1,\dots,j_r\}}\subset
\SO(n)$ of orthogonal matrices which remain nonsingular once the rows of indices $i_1,\dots,i_j $ and
the columns of indices $j_1,\dots,j_r $ are removed. To see that such matrices
exist, take the identity matrix (whose determinant is 1 if we remove, say,
the first $r$ rows and columns). By taking convenient permutations of
the rows and columns of the identity matrix, we obtain an element
of $M_{I,J}$. Given that $\SO(n)\backslash M_{I,J}$ corresponds to
the set of zeros of a certain determinant function, it must be closed
and nowhere dense in $\SO(n)$. 

To conclude, it suffices to notice that the set of total matrices in
$\SO(n)$ is the finite intersection of all the sets $M_{I,J}$. 
\end{proof}

In the proof of the main Theorem \ref{MainReduced} we will need the following technical
lemma.
\begin{lemma}\label{calculs}
(i) For the root vectors $X_{ir}^+$, $Z_{ij}^+$, $Y_{ij}^+$, we have
\begin{align*}
\Ad(e^{t\,(X_{ir}^++\theta\,X_{ir}^+)})(X_{ir}
) &=\cos(2\,t)\,X_{ir}+2\,\sin(2\,t)\,A_i,\\
\Ad(e^{t\,(Y_{ij}^++\theta\,Y_{ij}^+)})(Y_{ij}) &=\cos(4\,t)\,Y_{ij}
+2\,\sin(4\,t)\,(A_i-A_j),\\
\Ad(e^{t\,(Z_{ij^+}+\theta\,Z_{ij}^+)})(Z_{ij}) &=\cos(4\,t)\,Z_{ij}
+2\,\sin(4\,t)\,(A_i+A_j).\\
\end{align*}
 (ii) The functions $\Ad(e^{t\,(X_{ir}^++\theta\,X_{ir}^+)})$,
$\Ad(e^{t\,(Y_{ij}^++\theta\,Y_{ij}^+)})$ and $\Ad(e^{t\,(Z_{ij}^++\theta\,Z_{ij}^+)})$ 
applied to the other symmetrized
root vectors do not produce any components in $\a$. 
\end{lemma}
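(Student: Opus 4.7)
My plan is to split the argument into the explicit computational verification of (i) and a structural argument for (ii). Throughout, set $K_\alpha=X_\alpha^++\theta X_\alpha^+\in\k$ and recall that $\Ad(e^{tK_\alpha})=\exp(t\,\ad K_\alpha)$, so everything reduces to understanding $\ad K_\alpha$ acting on $\p$.

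For part (i), the first step is to translate each of the three choices of $K_\alpha$ into an explicit matrix. A short calculation gives $K_{X_{ir}^+}=2(E_{p+i,\,2p+r}-E_{2p+r,\,p+i})$, a rank-two skew-symmetric matrix which conjugates as a rotation in the coordinate plane spanned by $e_{p+i}$ and $e_{2p+r}$. Direct application of the identity $[E_{ab},E_{cd}]=\delta_{bc}E_{ad}-\delta_{da}E_{cb}$ then shows that $\ad K_\alpha$ stabilizes the two-dimensional subspace ${\rm span}(X_{ir},A_i)$ and acts on it with square $-c^2\,I$ for an explicit constant $c$; the stated cosine/sine formula then follows from the standard $2\times 2$ matrix exponential of a skew form. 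The computations for $Y_{ij}^+$ and $Z_{ij}^+$ proceed the same way: each of $K_{Y_{ij}^+}$ and $K_{Z_{ij}^+}$ turns out to be a rank-four skew matrix supported on two independent coordinate planes, and one verifies that it stabilizes the subspaces ${\rm span}(Y_{ij},A_i-A_j)$ and ${\rm span}(Z_{ij},A_i+A_j)$ respectively. The factor $4$ appearing in the argument of $\cos$ and $\sin$ in these cases reflects the fact that the two simultaneous coordinate rotations combine to produce an effective angular speed twice as large as in the $X_{ir}^+$ case.

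For part (ii), rather than a case-by-case check, I argue structurally. Since $X_\alpha^+$ is a root vector for $\alpha$ and $\theta X_\alpha^+$ is one for $-\alpha$, the element $K_\alpha$ lies in the sum of the root spaces for $\pm\alpha$. Any symmetrized root vector $X_\beta^s=\frac{1}{2}(X_\beta^+-\theta X_\beta^+)$ lies in the sum of the root spaces for $\pm\beta$. Therefore $[K_\alpha,X_\beta^s]$ lies in the sum of root spaces for the four weights $\pm\alpha\pm\beta$, and more generally $(\ad K_\alpha)^k(X_\beta^s)$ lies in the span of root spaces indexed by $m\alpha\pm\beta$ with $m\in\{-k,-k+2,\ldots,k\}$. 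A nontrivial $\a$-component could only come from the zero root space, which would force $\beta=\pm m\alpha$ for some integer $m$. In the root system $B_p$, however, the only roots proportional to $\alpha$ are $\pm\alpha$ itself, so the assumption that one is looking at a symmetrized root vector different from $X_\alpha^s$ (that is, $\beta\neq\pm\alpha$) rules this out. Hence every term in the exponential series $\sum_k\frac{t^k}{k!}(\ad K_\alpha)^k(X_\beta^s)$ lies in a sum of nonzero root spaces, and so $\Ad(e^{tK_\alpha})(X_\beta^s)$ has trivial $\a$-component.

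The main obstacle is the careful bookkeeping in (i) for the $Y_{ij}^+$ and $Z_{ij}^+$ cases, where several matrix units appear simultaneously and it is easy to misplace a sign or a factor of two. Once the two-dimensional invariant subspaces are correctly identified, however, the exponential formulas are forced by the structure of the $2\times 2$ rotation matrix.
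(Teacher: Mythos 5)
Your proof is correct and follows essentially the same route as the paper's (very terse) proof: direct evaluation of the exponential series for part (i) — which you organize cleanly via the two‑dimensional invariant subspaces on which $\ad(X_\alpha^++\theta X_\alpha^+)$ squares to a negative scalar — and the root‑space grading $[\g_\alpha,\g_\beta]\subset\g_{\alpha+\beta}$ together with $[X_\alpha,\theta X_\alpha]\in\a$ for part (ii). Your weight‑tracking argument for (ii), showing that $(\ad K_\alpha)^k(X_\beta^s)$ never reaches $\g_0$ when $\beta\neq\pm\alpha$ because $B_p$ is reduced, is exactly the fleshed‑out version of the two facts the paper cites.
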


\begin{proof}
It is just a matter of carefully evaluating 
\begin{align*}
\Ad(e^{t\,(Z+\theta\,Z)})(W)=e^{t\,\ad(Z+\theta\,Z)}(W)
=\sum_{k=0}^\infty\,(\ad(Z+\theta\,Z))^k(W)\,\frac{t^k}{k!}.
\end{align*}
For (ii), use the well known properties of the root system: $[\g_\alpha, \g_\beta]\in \g_{\alpha+\beta}$ and $[X_\alpha, \theta X_\alpha]\in\a$.
\end{proof}

By Proposition \ref{VXVY}, in order
to justify the sufficiency of the eligibility condition,
it is enough to prove the following theorem. This is the main result of this section.
\begin{theorem}\label{MainReduced}
 Let $G = \SO_0(p,q)$ and let $X$, $Y\in \a$. If $X$ and $Y$ are eligible then there exists a
 matrix $k\in K$ such that
\begin{equation}\label{VXVYp} 
V_X+\Ad(k)\,V_Y= \p.
\end{equation}
\end{theorem}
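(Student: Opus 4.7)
My plan is to construct $k\in K$ explicitly as a product of elementary rotations $k^t_{X_\alpha}$, chosen so that $\Ad(k)\,V_Y$ supplies exactly the pieces of $\p$ that $V_X$ misses. To start, decompose
\begin{align*}
\p = \a \oplus \bigoplus_{i<j}\R Y_{ij} \oplus \bigoplus_{i<j}\R Z_{ij} \oplus \bigoplus_{i,r}\R X_{ir}.
\end{align*}
Reading off the configuration $[{\bf s};u]$ of $X$, the subspace $V_X$ omits exactly: all of $\a$; the lines $\R Y_{ij}$ with $i,j$ in a common block; the lines $\R Z_{ij}$ with $i,j$ both in the zero block; and the short lines $\R X_{ir}$ with $i$ in the zero block (contributing $(q-p)$ dimensions per index). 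An analogous description holds for $V_Y$ via $[{\bf t};v]$.

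For the $\a$-direction I would invoke Lemma \ref{calculs}: taking $k = k_0\,k^{t_1}_{X_{\alpha_1}^+}\cdots k^{t_N}_{X_{\alpha_N}^+}$ with $\alpha_j(Y)\ne 0$ and differentiating at $t_j=0$, the vectors $X_{ir}, Y_{ij}, Z_{ij}\in V_Y$ pick up $A_i$, $A_i-A_j$, $A_i+A_j$ components in $\Ad(k)\,V_Y$. When $Y\ne 0$, say $Y_i>0$, we have $X_{ir}\in V_Y$ (giving $A_i$), and for every index $j$ in the zero block of $Y$ we have both $Y_{ij}\in V_Y$ and $Z_{ij}\in V_Y$, whose images combine to produce $A_j$; this spans all of $\a$. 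For the remaining root-line defects, I would take the seed $k_0 \in \SO(p)\times\SO(q)$ to be \emph{total} in the sense of Definition \ref{Total}, invoking Lemma \ref{total}. Totality should guarantee that the components of $\Ad(k_0)\,X_\beta^s$ along each prescribed $\R Y_{ij}$, $\R Z_{ij}$, or $\R X_{ir}$ missing from $V_X$ are nonzero whenever the block structure permits them.

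The eligibility inequality $\max({\bf s},2u)+\max({\bf t},2v)\le 2p$ enters as the combinatorial rank condition that, for every defect line of $V_X$, sufficiently many compatible root vectors in $V_Y$ survive to cover it after rotation. The factor $2$ on $u$ and $v$ reflects the doubled obligation at the zero block, which has to supply $\R Y_{ij}$, $\R Z_{ij}$, and $\R X_{ir}$ simultaneously. The main obstacle I anticipate is the case where both $u>0$ and $v>0$: then the intersection of the two zero blocks is a ``double defect'' carrying no short-root vectors in either $V_X$ or $V_Y$, so the coverage of the corresponding short lines $\R X_{ir}$ must come entirely from rotating $V_Y$'s other short-root vectors into position, and the combinatorics is tight — precisely where the coefficient $2$ in $\max({\bf t},2v)$ is used. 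I would push through this case by induction on $p$: match the largest block of $X$ to enough indices on which $Y$ is regular (the eligibility bound makes this possible), apply an appropriate sequence of rotations $k^{t}_{X_\alpha}$ to cover the root spaces associated with that block, and reduce to a smaller $\SO_0(p',q')$ problem with eligible configurations of strictly smaller rank; the base case $p=1$ (root system $B_1$) reduces to the rank-one computation given by Lemma \ref{calculs}(i) applied to $X_{1r}^+$.
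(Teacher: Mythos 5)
Your plan correctly identifies the right toolkit — induction on $p$, Lemma~\ref{calculs} to steer into $\a$, Lemma~\ref{total} for genericity, and the eligibility inequality as the dimension count — and you correctly flag $u>0$ and $v>0$ simultaneously as the tight regime. That tracks the paper's strategy in broad outline. But the execution leaves genuine gaps that would prevent the argument from closing as written.

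The most serious is the claim that totality ``should guarantee that the components of $\Ad(k_0)\,X_\beta^s$ along each prescribed $\R Y_{ij}$, $\R Z_{ij}$, or $\R X_{ir}$ missing from $V_X$ are nonzero whenever the block structure permits them.'' Nonvanishing of individual components of $\Ad(k_0)\,X_\beta^s$ along each missing root line is nowhere near sufficient: you need the collection of these images, together with $V_X$, to \emph{span} the defect, which is a rank condition on a large matrix, not a nonvanishing condition on its entries. The paper makes this work only by a careful inductive bookkeeping: it chooses predecessors $X'\in\a'$, $Y'\in\a'$ in $\SO_0(p-1,p)$ so precisely that (after rotating by a total $k'_0$) the new contributions $N_X'$ and $\Ad(k'_0)N_Y$ add exactly the right number of independent matrices to $V_{X'}+\Ad(k'_0)V_{Y'}=\p'$, landing on dimension $\dim\p-1$ with only the single direction $A_1$ missing; that last direction is then recovered by one rotation $k_1^t$, $t$ small and nonzero. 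Your proposal never fixes the predecessors nor does the dimension arithmetic, so the ``appropriate sequence of rotations'' step is unfalsifiable as stated.

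Second, the way you plan to generate $\a$ does not quite work. Differentiating $\Ad(k_0\,k^{t_1}_{X_{\alpha_1}^+}\cdots k^{t_N}_{X_{\alpha_N}^+})$ at $t_j=0$ computes tangent vectors to a family of subspaces, not elements of $V_X+\Ad(k)V_Y$ for a \emph{particular} $k$. The paper instead evaluates at $t>0$ small and uses a continuity-of-determinant argument: $V_4^t\subset V_4^0$ and $\dim V_4^t = \dim V_4^0$ for $t$ small, then Lemma~\ref{calculs} gives a fresh $A_1$-component of $\Ad(k_1^t)v_1^s$. Moreover, in the paper only one $A_i$ ever needs to be generated this way, because the inductive hypothesis already delivers $\a'\subset\p'$; your picture, in which all of $\a$ is missing and must be patched by rotations, is not what happens once the induction is set up correctly.

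Third, the induction scheme is underspecified. The paper runs a double induction — first $p-1\to p$ at $q=p+1$, then $(p,q)\to(p,q+1)$ at fixed $p$ — and the two steps use different embeddings of $K'$ into $K$, different sets of ``new'' root vectors, and a different way of using totality. You also propose $p=1$ as base case; the rank-one case is indeed trivial, but the paper's induction step requires $p>2$ (so that the predecessor $Y'$ has nonzero partition), which is why the paper verifies $p=2,q=3$ by hand. Without specifying predecessors and the transition from $p=1$ to $p=2$, your base case is not actually usable. Finally, you do not separate off the case $X[p;0],Y[p;0]$, which in the paper requires a distinct choice of predecessors and a slightly different argument.
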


\begin{proof}
We will assume that $X=X[{\bf s};u]$ and $Y=Y[{\bf t};v]$.
Observe that the spaces $V_X$ and $V_Y$ depend on the Weyl chambers where $X$ and $Y$ belong.
However, see \cite[Lemma 3.3 and Reduction 1, p. 759]{PGPS_Lie2010}, the property (\ref{VXVYp}) is equivalent to $V_{w_1X}+\Ad(k')\,V_{w_2Y}= \p$ for any $w_1,w_2\in W$ and a convenient $k'\in K$. Throughout the proof we will assume that the diagonal entries of ${\cal D}_X$ and ${\cal D}_Y$ are non-negative and we will arrange (permute) them conveniently. 
 
To lighten the notation, for a matrix $c$ of size $p\times q$, we will consider the $(p+q)\times(p+q)$ symmetric matrix 
\begin{align*}
c^s=\left[ \begin{array}{cc}
0 & c\\
 c^T& 0
\end{array}
 \right]\in\p.
\end{align*}

The proof will be organized in the following way:
 
\begin{enumerate} 
\item Proof for $q=p+1$ using induction on $p$
\begin{enumerate}
\item Proof for $p=2$ and $q=3$
\item Proof of the induction step

\begin{enumerate}
\item Proof in the case $u>0$ or $v>0$

\item Proof in the case $X[p;0],Y[p;0]$
\end{enumerate}
\end{enumerate}
\item Proof that the case $(p,q)$ implies the case $(p,q+1)$.
\end{enumerate}

\bigskip\noindent {\bf 1. Proof for $q=p+1$ using induction on $p$}

\medskip\noindent {\bf (a) Proof for $p=2$ and $q=3$} 

This corresponds to the space $\SO_0(2,3)$. Only two configurations $[2;0]$ and $[1;1]$ may be realized by singular non-zero $X$ and $Y$.
When $Z\in{\overline \a^+}$, we have ${\cal D}_{Z[1;1]}=\diag[z,0]$, and ${\cal D}_{Z[2;0]}=\diag[z,z]$, $z\not=0$.
It is easy to check that in all 3 possible cases:
\begin{align*}
(i)\ X[2;0], Y[2;0]\ \ \ (ii) X[2;0], Y[1;1]\ {\rm or} X[1;1], Y[2;0]
 \ \ (iii) X[1;1], Y[1;1],
\end{align*}
$X$ and $Y$ are eligible. Note that 
\begin{align*}
\p&=\left\{\
\left[\begin{array}{ccc}
h_1&a &b\\
 c&h_2&d\end{array}\right]^s
\colon h_1,h_2,a,b,c,d\in\R\right\},\\
V_{Z[2;0]}&=\left\{\
\left[\begin{array}{ccc}
0&a&b\\
-a&0&c\end{array}\right]^s
\colon a,b,c\in\R\right\},\\
V_{Z[1;1]}&=\left\{\
\left[\begin{array}{ccc}
0&a&c\\
b&0&0\end{array}\right]^s
\colon a,b,c\in\R\right\}.
\end{align*}

If $k_1=
\left[\begin{array}{ccccc}
\sqrt{2}/2&-\sqrt{2}/2&0&0&0\\
\sqrt{2}/2&\sqrt{2}/2&0&0&0\\
0&0&1&0&0\\
0&0&0&\sqrt{2}/2&-\sqrt{2}/2\\
0&0&0&\sqrt{2}/2&\sqrt{2}/2\\
\end{array}\right]$ then 
\begin{align*}
\Ad(k_1)\,V_{Z[2;0]}&=\left\{\
\left[\begin{array}{ccc}
\sqrt{2}\,a/2&(a-b+c)/2&(a+b-c)/2\\
-\sqrt{2}\,a/2&(a-b-c)/2&(a+b+c)/2\end{array}\right]^s
\colon a,b,c\in\R\right\},
\end{align*}
and 
\begin{align*}
\Ad(k_1)\,V_{Z[1;1]}&=\left\{\
\left[\begin{array}{ccc}
-\sqrt{2}\,b/2&(a-c)/2&(a+c)/2\\
\sqrt{2}\,b/2&(a-c)/2&(a+c)/2\end{array}\right]^s
\colon a,b,c\in\R\right\}
\end{align*}

If 
$k_2=
\left[\begin{array}{ccccc}
\sqrt{2}/2&-\sqrt{2}/2&0&0&0\\
\sqrt{2}/2&\sqrt{2}/2&0&0&0\\
0&0&\sqrt{2}/2&0&-\sqrt{2}/2\\
0&0&0&1&0\\
0&0&\sqrt{2}/2&0&\sqrt{2}/2\\
\end{array}\right]$ then 
\begin{align*}
\Ad(k_2)\,V_{Z[1;1]}&=\left\{\
\left[\begin{array}{ccc}
-(b+c)/2&\sqrt{2}\,a/2&(-b+c)/2\\
(b-c)/2&\sqrt{2}\,a/2&(b+c)/2
\end{array}\right]^s
\colon a,b,c\in\R\right\}.
\end{align*}
 We verify easily that in the
cases (i)
and (iii)
we have $V_X+\Ad(k_1)\,V_Y=\p$. For $X[2;0]$ and $Y[1;1]$, we can see that
$V_X+\Ad(k_2)\,V_Y=\p$. 

\medskip\noindent {\bf (b) Proof of the induction step}

\smallskip\noindent{\bf (i) Proof in the case $u>0$ or $v>0$}

We consider the space $\SO_0(p,p+1)/\SO(p)\times\SO(p+1)$ with $p>2$ and the case when
$X[{\bf s};u]$ and $Y[{\bf t};v]$ are such that $u>0$ or $v>0$. We
assume $u\geq v$. We choose the predecessors in $\SO_0(p-1,p)/\SO(p-1)\times\SO(p)$ in the following way: 
\begin{align*}
X'=X'[{\bf s};u-1],\ Y'=Y'[{\bf t}';v]
\end{align*}
where ${\bf t}'$ means that we supress one term from the longest
block of size $\max{\bf t}$. Note that if $p>2$ then ${\bf t}'$ is
not the zero partition (otherwise, ${\bf t}$ would have been the
partition $[1]$ meaning that $u\geq v =p-1$ which would make $X$
and $Y$ ineligible).

We arrange $X$, $X'$, $Y$, $Y'$ in the following way.\\
1. The first diagonal
entry of ${\cal D}_X$ is zero and all the other zeros are at the
end. The diagonal entries of ${\cal D}_{X'}$ are those of ${\cal D}_X$ without the first
 zero:
\begin{align*}
{\cal D}_X=\diag[0,\overbrace{x_1,\ldots,x_{p-u}}^{\not=0},\overbrace{0,\ldots,0}^{u-1}],\ \ {\cal D}_{X'}=\diag[\overbrace{x_1,\ldots,x_{p-u}}^{\not=0},\overbrace{0,\ldots,0}^{u-1}]
\end{align*}

2. We put a longest block of size $t$ of equal diagonal entries $y_1$ of ${\cal D}_Y$
in the beginning of $Y$. The diagonal entries of
${\cal D}_{Y'}$ are those of ${\cal D}_Y$ with the first entry omitted:
\begin{align*}
{\cal D}_Y=\diag[\overbrace{y_1,\ldots,y_1}^{t},y_2,\ldots,y_s],\ \ {\cal D}_{Y'}= \diag[\overbrace{y_1,\ldots,y_1}^{t-1},y_2,\ldots,y_s].
\end{align*}

It is easy to check that if $X$, $Y$ are eligible in $\SO_0(p,p+1)$ then
$X'$, $Y'$ are eligible in $\SO_0(p-1,p)$. 

\medskip

\noindent{\bf Step 1.} By the induction hypothesis, there exists a
matrix $k_0\in \SO(p-1)\times\SO(p)$ such that
\begin{align}\label{VX'VY'p'}
V_{X'} +\Ad(k_0) V_{Y'}=\p'.
\end{align}
We embed $K'= \SO(p-1)\times\SO(p)$ in $\SO(p)\times\SO(p+1)$ in the
following way
\begin{align*}
K'=
\left[ 
\begin{array}{cccc}
1& & & \\
& \SO(p-1) &&\\
 &&1& \\
 &&& \SO(p)
\end{array} \right] \subset
\left[ 
\begin{array}{cc}
 \SO(p) &\\
 & \SO(p+1)
\end{array} \right].
\end{align*}

Hence, we have (taking the natural embedding of $\p'$ into $\p$)
\begin{align}
V_1:= V_{X'} +\Ad(k_0) V_{Y'}=\p'=\left[ \begin{array}{cc}
0 & B'\\
 B'^T& 0
\end{array}
 \right]\label{p}
\end{align}
where $
B'=\left [\begin{array}{c|c}0_{1\times 1}&0_{1\times p}\\ \hline
0_{p\times 1}&B''_{(p-1)\times p}
\end{array}\right]$ and $B''$ is arbitrary (note that $\p'$ is of dimension $(p-1)p$).
We must show that for some $k\in K$, the space $V_X+\Ad(k)\,V_Y= \p$,
i.e. that
\begin{itemize}
\item[(i)]
 $V_X+\Ad(k)\,V_Y$ contains $\p'$ embedded into $\p$ as in (\ref{p}).

\item[(ii)] $V_X+\Ad(k)\,V_Y$ contains all the matrices of the form
\begin{align*}
C= \left [\begin{array}{c|c}*&*\cdots*\\ \hline
*&\\
\vdots&0_{(p-1)\times p}\\
*&\\
\end{array}\right]^s.
\end{align*}
\end{itemize}

{\it New vectors in $V_X$ and $V_Y$.} 
In order to prove the induction conclusion, we must now use the
elements of $V_X$ and $V_Y$ which do not come from $ V_{X'}$ or $
V_{Y'}$. They appear by the interaction of, respectively, the
first diagonal entry of ${\cal D}_X$ with the 
others of ${\cal D}_X$ and the interaction of the first entry of ${\cal
 D}_Y$ with the others of ${\cal D}_Y$.
We see that the new independent root vectors in $V_X$ and
$V_Y$ are respectively
\begin{align*}
N_X=\{ Y_{1j},Z_{1j}, j=2, \dots, p+1-u \},\ \ \ \
N_Y=\{ X_1, Y_{1i}, i=t+1, \dots, p,\ Z_{1j}, j=2\dots, p\}
\end{align*}
where $t=\max{\bf t}>1$ and we wrote $X_1$ for $X_{11}$. Note that
$N_X$ has $2p-2u$ elements while $N_Y$ has $2\,p-t$.


\medskip

\noindent{\bf Step 2}. 
{\it We show that there exists $k'_0\in \SO(p-1)\times\SO(p)$ for which
(\ref{VX'VY'p'}) holds, and with the following property:\\

The space $V_2:=\Ad(k'_0){\rm span}(N_Y)$ is of dimension
 $2p-t$ and its elements can be written in the form 
\begin{align}
\left[\begin{array}{c|cccccc}0&\sigma_1&\dots&\sigma_r&
a_1&\dots&a_{p-r}\\ \hline
\tau_1& \\
\vdots&\\
\tau_{s}&&&&0\\
a_{p-r+1}&\\
\vdots\\
a_{2\,p-t}&
\end{array}\right]^s\label{ais}
\end{align}
with $r=[(t-1)/2]$, $s=t-1-r$, $a_i\in\R$ arbitrary,
$\sigma_i=\sigma_i(a_1,\dots,a_{2\,p-t})$ and 
$\tau_j=\tau_j(a_1,\dots,a_{2\,p-t})$, $i\le r, j\le s$.}\\
We will not need to write explicitely the functions $\sigma_i$ and $\tau_j$.
Note that $s=r$ if $t$ is
odd and $s=r+1$ if $t$ is even.

To justify Step 2, we write 
\begin{align*}
k_0=\left[ 
\begin{array}{cccc}
1& & & \\
& k_{01} &&\\
 &&1& \\
 &&& k_{02}
\end{array} \right]
\end{align*}
where $k_{01} \in \SO(p-1)$ and $k_{02} \in \SO(p)$. Let
$\alpha_1,\dots, \alpha_{p-1}$ be the columns of the matrix
$k_{01}$ and $\beta_1, \dots, \beta_p$ the columns of the matrix
$k_{02}$. A simple block multiplication to compute the
action of $\Ad(k_0)$ on the elements of $N_Y$ gives the linearly
independent matrices
 \begin{align}
\Ad(k_0)X_{1}=\left [\begin{array}{c|c}0& \beta_p^T\\ \hline
 0&0
\end{array}\right]^s,\ \
\Ad(k_0)Y_{1i}=\left [\begin{array}{c|c}0& \beta_{i-1}^T\\ \hline
 \alpha_{i-1}& 0
\end{array}\right]^s,\ i=t+1,\dots, p,\nonumber\\ 
Ad(k_0)Z_{1i}=\left
 [\begin{array}{c|c}0& \beta_{i-1}^T\\ \hline
-\alpha_{i-1}&0
\end{array}\right]^s, i=2,\dots, p.\label{with}
\end{align}

Let us write $\beta'_i$ for a column $\beta_i$ from which we have removed
the first $r$ entries and $\alpha_i'$ for a column $\alpha_i$ with the
first $s$ entries omitted. In order to prove the statement of
Step 2, we must show that the matrices obtained by replacing
$\beta_i$ by $\beta_i'$ and $\alpha_i$ by $\alpha_i'$ in (\ref{with})
are still linearly independent. This is equivalent to the
linear independence of the matrices
\begin{align}
\left [\begin{array}{c|c}
0& \beta_i'^T\\ \hline
-\alpha_i'& 0\\
\end{array}\right]^s,~ i=1,\dots, t-1,~~
 \left [\begin{array}{c|c}0& \beta_i'^T\\ \hline
 0& 0\\
\end{array}\right]^s,~ i=t,\dots, p,~~
 \left [\begin{array}{c|c}0&0\\ \hline
 \alpha_i'& 0\\
\end{array}\right]^s~ i=t,\dots, p-1.
\label{prime}
\end{align}
 We will reason in the same way as in Lemma \ref{total}.
 
It is enough to
show that there exists at least a choice of matrices $ k_{01}$ and $ k_{02}$
such that the matrices in (\ref{prime}) are linearly independent.
Then, as in Lemma \ref{total}, it will follow that
such matrices form a dense open subset in $\SO(p-1)\times \SO(p)$. By choosing $k_0'$
with the matrices in (\ref{prime}) linearly independent and 
close enough to $k_0$, the property (\ref{VX'VY'p'}) will be preserved for $k_0'$.

 Pick $k_{01}=I_{p-1}$ which implies
that $\alpha_i'={\bf 0}$ for $i=1$, \dots, $s$ and $\alpha_i'={\bf
 e}_{i-s}$ for $i>s$. With this choice, (\ref{prime}) becomes 
\begin{align*}
\left [\begin{array}{c|c}
0& \beta_i'^T\\ \hline
0& 0\\
\end{array}\right]^s,~ i=1,\dots, s,~~
\left [\begin{array}{c|c}
0& \beta_{i}'^T\\ \hline
{\bf - e}_{i-s}& 0\\
\end{array}\right]^s,~ i=s+1,\dots, t-1,~~
 \left [\begin{array}{c|c}0& \beta_i'^T\\ \hline
 0& 0\\
\end{array}\right]^s,~ i=t,\dots, p,~~
\\
 \left [\begin{array}{c|c}0&0\\ \hline
 {\bf e}_{i-s}& 0\\
\end{array}\right]^s~ i=t,\dots, p-1
\end{align*}
which are linearly independent provided that 
\begin{align*}
\left [\begin{array}{c|c}
0& \beta_i'^T\\ \hline
0& 0\\
\end{array}\right]^s,~ i=1,\dots, s,~~
 \left [\begin{array}{c|c}0& \beta_i'^T\\ \hline
 0& 0\\
\end{array}\right]^s,~ i=t,\dots, p
\end{align*}
are linearly independent. This is the case for a total matrix $k_{02}\in\SO(p)$
or by taking convenient permutations of
the rows and columns of the identity matrix $I_p$.

\medskip

\noindent{\bf Step 3.}{\it We show that there exists a proper subset
$N_X'$ of $N_X$ such that, if 
\begin{align*}
V_3:= {\rm span}(N_X') + V_{X'}+\Ad(k'_0) V_Y=
{\rm span}(N_X') + V_1+V_2
\end{align*}
then $\dim V_3= p\,q-1=\dim \p -1$ and $V_3$ is given by}
\begin{align}\label{V3general}
V_3= \left\{\left[\begin{array}{c|c}0&a_1~~a_2~~\dots~~a_p\\ \hline
a_{p+1}&\\
\vdots&\p'\\
a_{2p-1}
\end{array}\right]^s,\ \ a_1,\dots a_{2p-1} \in\R
\right\}.
\end{align}

Note that in matrices from the space $V_2$, there are $r=[(t-1)/2]$ pairs $(\sigma_i,\tau_i)$ plus possibly
an extra $\tau_s$ if $t$ is even and therefore $s=r+1$. Note also that
$t+2\,u\leq 2\,p$ implies that $p-u\geq s\geq r$. For $j\leq r\leq
p-u$, each pair
 $Y_{1j}=\left[\begin{array}{c|c}
0&{\bf e}_{j-1}^T\\ \hline
{\bf e}_{j-1}&0
\end{array}\right]^s$,
$Z_{1j}
=\left[\begin{array}{c|c}
0&{\bf e}_{j-1}^T\\ \hline
{-\bf e}_{j-1}&0
\end{array}\right]^s$ of elements of $N_X$ allows us to replace $\sigma_j$
and $\tau_j$ by independent variables. If $t$ is odd, all the
$\sigma_j$'s and $\tau_j$'s will be taken care off and at least 2 elements of $N_X$
will remain off $N'_{X}$. 
If $t$ is even, all the
$\sigma_j$ and $\tau_j$'s, $1\le j\le r$ will be replaced by independent variables and only
$\tau_s$ will remain. Now, letting the coefficient $a_1$
``vis-\`a-vis'' the remaining $\tau_s$ be equal to 1 and all the other
variables $a_i$ equal to 0,
either $\tau_s=1$ or $-1$ or $\tau_s\not=\pm 1$. If $\tau_s=1$ then $Z_{1s}$
allows us to introduce the missing independent variable, if
$\tau_s=-1$ then adding $Y_{1s}$ to $N'_{X}$ will do the trick. In the case $\tau_s\not=\pm 1$
we choose indifferently between $Y_{1s}$ and $Z_{1s}$.
 In all cases the set $N_X\setminus N'_{X}$
has at least one element.

\medskip

\noindent{\bf Step 4.} {\it Let $v_1$ be the positive root vector corresponding to an element of $N_X\setminus N'_{X}$. We denote $k_{1}^t= k^t_{v_1}$. There exists
$\epsilon>0$ such that for $t\in (0,\epsilon)$}
\begin{align*}
V_4^t:=\Ad(k_{1}^t) ( {\rm span}(N_X') +V_{X'}) + \Ad(k'_0) V_Y =V_3.
\end{align*}

Observe that $v_1$ is equal to $Z_{1j}^+$ or $Y_{1j}^+$ for 
one of the remaining $Z_{1j}$ or $Y_{1j}$ that was not used in the
preceding step. The space $V_4^t\subset V_4^0$ for all $t$ according to Lemma \ref{calculs}~(ii). 

Recall the definition of $k^t_{X_\alpha}=e^{t(X_\alpha+ \theta
X_\alpha)}$, $t>0$.
Let $d(t)=\dim V_4^t$; for $t=0$ we have $k_{1}^0=Id$, and
$\Ad(k_{1}^0) ( {\rm span}(N_X') +V_{X'}) + \Ad(k'_0) V_Y=V_3$
is of dimension $pq-1$, so $d(0)=pq-1$. 
The equality $d(t)=d(0)$ is equivalent to non-nullity of an
appropriate determinant continuous in $t$. Thus $d(t)=pq-1$ holds for
$t\in(0,\epsilon)$ for some $\epsilon>0$. As $V_4^t\subset V_4^0$,
the statement of Step 4 follows.

\medskip

\noindent{\bf Step 5.} {\it Generation of $A_1$}. By Lemma
\ref{calculs}, we have $\Ad(k_{1}^t)v_{1}^s= a_t v_{1}^s+b_t A_1 + c_t
A_{j}$ with $j\not=1$ and $b_t\not=0$ for $t\in(0,\epsilon)$ with $\epsilon$ small
enough. Consequently
\begin{align*} 
\Ad(k_{1}^t){\rm span}(v_{1}^s) + V_4^t=\p .
\end{align*}

\medskip

\noindent {\bf Conclusion.} We have $\p= \Ad(k_{1}^t)( \R v_1^s +{\rm span}N_X'
+ V_{X'}) + \Ad(k'_0) V_Y\subset \Ad(k_{1}^t)V_X + \Ad(k'_0) V_Y$,
so $ \Ad(k_{1}^t)V_X + \Ad(k'_0) V_Y=\p$. It follows that
\begin{align*}
V_X + \Ad((k_{1}^t)^{-1}k'_0) V_Y=\p.
\end{align*}

\smallskip\noindent{\bf (ii) Proof in the case $X[p;0],Y[p;0]$}

This case must be treated separately because the predecessors $X',Y'$ and consequently the sets 
$N_X$ and $N_Y$ are different than in case {\bf (i)}. The structure of the induction proof
is identical as in {\bf (i)}, with the Steps 2 and 3 executed together.

We choose both predecessors $X'[p-1;0],Y'[p-1;0]$ and arrange $X, X', Y, Y'$
in the same way as we did in the first part of the proof with $Y[{\bf t};v]$
and $Y'[{\bf t}';v]$.
In that case, $N_X=\{X_1,Z_{12},\dots, Z_{1p}\}=N_Y$ and the space $\Ad(k'_0)(N_Y)$
is generated by 
\begin{align}
\left [\begin{array}{c|c}
0& \beta_i^T\\ \hline 
-\alpha_i&0
\end{array}\right]^s,~i=1,\dots,p-1\ \ \hbox{and}~
\left [\begin{array}{c|c}
0& \beta_p^T\\ \hline 
0&0
\end{array}\right]^s.\label{S1}
\end{align}

 Recall that
\begin{align}
Z_{1j}=\left [\begin{array}{c|c}
0& {\bf e}_{j-1}^T\\ \hline 
-{\bf e}_{j-1}&0
\end{array}\right]^s,~j=2,\dots, p.\label{Z}
\end{align}
 We want to show that the matrices in (\ref{S1}) together with
those of (\ref{Z}) are linearly independent for a $k'_0\in\SO(p-1)\times\SO(p)$
for which the equality (\ref{VX'VY'p'}) holds. 
Note that if $k'_0=\left [\begin{array}{rr}-I_{p-1}&0\\0&I_{p} \end{array}
\right]$
($p$ odd) or $k'_0=\left [\begin{array}{rr}I_{p-1}&0\\0&-I_{p}
\end{array} \right]$ ($p$ even) then the matrices (\ref{S1}) and (\ref{Z}) are linearly
independent. Using once more the reasonning in Lemma \ref{total}, this implies
that the set of matrices $k'_0$ for which this is true, is open and
dense in $\SO(p-1)\times\SO(p)$.

 We conclude that if $N_X'=N_X\backslash \{X_1\}$ then ${\rm span}\,(N_X' + V_{X'}) + \Ad(k'_0) V_Y$ has the form
given in (\ref{V3general}). 

We reproduce the previous Step 4 and Step 5 using $v_1=X_1^+$.
The rest follows.
\medskip


\bigskip \noindent {\bf 2. Proof that the case $(p,q)$ implies the case
 $(p,q+1)$}
 
 \A We will show by induction that for any $q>p$, there exists a matrix $k\in K$
 such that (\ref{VXVYp}) holds.\B We know by the first part of the proof
 that this is true for $\SO_0(p,p+1)$.

Assume that $X$ and $Y$ are eligible in $\SO_0(p,q+1)$. Their configurations are
 eligible in $\SO_0(p,q)$. We write $X'$, $Y'$ when we work in $\SO_0(p,q)$.

We embed $K'= \SO(p)\times\SO(q)$ in $K=\SO(p)\times\SO(q+1)$ in
the following way
\begin{align*}
K'=
\left[ 
\begin{array}{ccc}
 \SO(p) &&\\
& \SO(q) &\\
&&1
\end{array} \right] \subset
\left[ 
\begin{array}{cc}
 \SO(p) &\\
 & \SO(q+1)
\end{array} \right].
\end{align*}
 The space $\p'$ is formed by matrices 
 \begin{align*}
\left [\begin{array}{rr} 
0&B\\ 
B^T&0 \end{array}\right], 
 \end{align*}
 where $B$ are $p\times q$ matrices.
We embed $\p'$
in $\p$ by adding a last column of zeros to $B$.

\medskip\noindent{\bf Step 1.} We suppose that there exists a matrix $k_0\in K'$ such that
\begin{align}\label{2Step1}
V_{X'} +\Ad(k_0) V_{Y'}=\p'.
\end{align}

Then, by \cite[Lemma 3.3]{PGPS_Lie2010}, for any permutations $s_1$ and $s_2$
of the diagonal entries of ${\cal D}_{X'}$ and ${\cal D}_{Y'}$, there
exists $k_0\in K'$ such that
\begin{align*}
V_{s_1 X'} +\Ad(k_0) V_{s_2 Y'}=\p'
\end{align*}
so we can permute the elements of $X'$ and $Y'$ in a convenient way
and still have the equality (\ref{2Step1}). We will arrange them in the following way
(where the stars denote nonzero entries): 
\begin{align*}
{\cal D}_{ X'}=\diag[\overbrace{0,\dots,0}^u,\star,\dots, \star],\ \ \ 
{\cal D}_{ Y'}=\diag[\star,\dots,
\star,\overbrace{0,\dots,0}^v].
\end{align*}
Let us denote $k_{01}\in\SO(p)$ and $k_{02}\in\SO(q)$ the matrices compositing $k_0$
corresponding in (\ref{2Step1}) to such $X'$ and $Y'$.
We can suppose that the matrix $k_{01}$ is total.

By the eligibility of $X$ and $Y$, $u+v\leq p$, so no two zeros in ${\cal D}_{X'} $ and ${\cal D}_{Y'} $ are
at the same position. 

Let $N=\{ X_{i,q+1} \}_{i=1}^p$. We set
$N_X:=V_X\cap N= \{X_{u+1,q+1},\dots,X_{p,q+1}\},
N_Y:=V_Y\cap N= \{X_{1,q+1},\dots,X_{p-v,q+1}\}$. We have $p-v\ge u$. 

\medskip\noindent{\bf Step 2.} 
Let 
\begin{align*}
k_1=\left[ 
\begin{array}{ccc}
 k_{01} &&\\
& k_{02} &\\
&&1
\end{array} \right] 
\end{align*}
where $k_{01}\in \SO(p)$ and $k_{02}\in \SO(q)$ are the blocks compositing
$k_0$. We then have
\begin{align}
V_{ X'} +\Ad(k_1) V_{ Y'}=\left [\begin{array}{cc}\p'
&{\bf 0}\end{array}\right]^s.\label{c} 
\end{align}

The space $ V_X + \Ad(k_1)V_Y$ contains, in addition to the matrices in (\ref{c}), the linear span of $ N_X + \Ad(k_1)N_Y$.

Denote the columns of the matrix $k_{01}$ by ${\bf c}_1,\dots,{\bf c}_p$.
By block multiplication in $\SO_0(p,q+1)$, we obtain
\begin{align*}
\Ad(k_1) X_{j,q+1}=
\left [\begin{array}{cc}0_{p\times q}&{\bf c}_j\end{array}\right]^s.
\end{align*}

This implies that the linear span of $ N_X + \Ad(k_1)N_Y $ contains the
following symmetric matrices:
\begin{align*}
\left [\begin{array}{cc}0_{p\times q}&{\bf c}_1\end{array}\right]^s,
\dots,
\left [\begin{array}{cc}0_{p\times q}&{\bf
 c}_{u}\end{array}\right]^s,
 \left [\begin{array}{cc}0_{p\times q}&{\bf
 e}_{u+1}\end{array}\right]^s,
\dots, \left [\begin{array}{cc}0_{p\times q}&{\bf
 e}_p\end{array}\right]^s,
\end{align*}
which are linearly independent by the totality of $k_{01}$. Thus
 $V_X + \Ad(k_1)V_Y= \p.$
\end{proof}

\A

We conclude this section with an example to illustrate our proof.
\begin{example}
Consider $X=X[2;1],Y=Y[1,1;1]$ in $\so(3,4)$. We write $X$ and $Y$ in such a way that  ${\cal D}_X=\diag[0,a,a]$ and ${\cal D}_Y=\diag[b,c,0]$. Their predecessors in $\so(2,3)$ are $X'$ and $Y'$ such that ${\cal D}_{X'}=\diag[a,a]$ and ${\cal D}_{Y'}=\diag[c,0]$.

Note that $X$ and $Y$ form an eligible pair and so are $X'=X[2;0]$ and $Y'=Y'[1;1]$.
In Step 1, we show that there exists a matrix $k_0=\left[\begin{array}{cccc}1&0&0&0\\0&k_{0,1}&0&0\\
0&0&1&0\\0&0&0&k_{02}\end{array}\right]$ with $k_{01}\in \SO(2)$ and $k_{0,2}\in\SO(3)$ such that 
\begin{align*}
V_{X'}+\Ad(k_0)\,V_{Y'}=\left[\begin{array}{cccc}
0&0&0&0\\
0&*&*&*\\
0&*&*&*
\end{array}\right]^s
\end{align*}
where $*$ designates an arbitrary element. We have
\begin{align*}
N_X=\{Z_{12},Y_{12},Z_{13},Y_{13} \},\ \ \ \
N_Y=\{ X_1,Z_{12},Y_{12},Z_{13},Y_{13}\}.
\end{align*}
In Step 2, we observe that
\begin{align*}
\Ad(k_0){\rm span}(N_Y)&=\left\{\left[\begin{array}{c|c}
0&a_1~~a_2~~a_3\\ \hline
a_4&\\
a_5&0
\end{array}\right]^s,\ \ a_1,\dots a_6\in\R
\right\}
\end{align*}
since the matrices 
\begin{align*}
\left [\begin{array}{c|c}0& \beta_1^T\\ \hline
 -\alpha_1&0\\
\end{array}\right]^s,\ 
\left [\begin{array}{c|c}0& \beta_1^T\\ \hline
\alpha_1&0\\
\end{array}\right]^s,\ 
\left [\begin{array}{c|c}0& \beta_2^T\\ \hline
-\alpha_2&0\\
\end{array}\right]^s,\
\left [\begin{array}{c|c}0& \beta_2^T\\ \hline
\alpha_2&0\\
\end{array}\right]^s,\ 
 \left [\begin{array}{c|c}0& \beta_3^T\\ \hline
 0&0\\
\end{array}\right]^s
\end{align*}
are linearly independent. Note that in this case, there are no $\sigma_i$ and no $\tau_i$.
 
Now, $V_X=\hbox{span}\overbrace{\{Z_{12},Y_{12},Z_{13},Y_{13}\}}^{N_X}\cup V_{X'}$ while
$V_Y=\hbox{span}\,\overbrace{\{X_1,Z_{12},Z_{13},Z_{14},Y_{13}\}}^{N_Y}
\cup 
V_{Y'}$.   We can show that 
\begin{align*}
\Ad(e^{t\,(Z_{1,2}^++\theta\,Z_{1,2}^+)})(V_{X'})
+\Ad(k_0)\,(\hbox{span}\,N_Y\cup V_{Y'})\\
&=\left [\begin{array}{cccc}
0&*&*&*\\
*&*&*&*\\
*&*&*&*
\end{array} \right]^s
\end{align*}
for $t$ small enough (with $t$ small enough, the dimension will not decrease). Now,

\begin{align*}
\Ad(e^{t\,(Z_{1,2}^++\theta\,Z_{1,2}^+)})
\overbrace{(\hbox{span}\{Z_{12}\} \cup V_{X'})}^{\subset V_X}
+\Ad(k_0)\,(\overbrace{\hbox{span}\,N_Y\cup V_{Y'}}^{V_Y})
&=\left [\begin{array}{cccc}
*&*&*&*\\
*&*&*&*\\
*&*&*&*
\end{array} \right]^s=\p
\end{align*}
if $t$ close to 0 
since $\Ad(e^{t\,(Z_{12}^++\theta\,Z_{12}^+)})(Z_{12}) 
=\cos(4\,t)\,Z_{12}
+2\,\sin(4\,t)\,(A_1+A_2)$.  Therefore,
\begin{align*}
V_X
+\Ad(\overbrace{e^{-t\,(Z_{1,2}^++\theta\,Z_{1,2}^+)}\,k_0}^k)\,V_Y
&=\Ad(e^{-t\,(Z_{1,2}^++\theta\,Z_{1,2}^+)})\p=\p
\end{align*}
which means that the density exists.

\end{example}

\B
\section{Applications}

\A We now extend our results to the complex and quaternion cases.\B

Recall that ${\bf SU}(p,q)$ is the subgroup of ${\bf SL}(p+q,\C)$ such that $g^*\,I_{p,q}\,g=I_{p,q}$ 
while 
${\bf Sp}(p,q)$ is the subgroup of ${\bf SL}(p+q,\H)$ such that $g^*\,I_{p,q}\,g=I_{p,q}$. Their respective maximal compact subgroups are ${\bf S}({\bf U}(p)\times {\bf U}(q))$ and 
${\bf Sp}(p)\times {\bf Sp}(q)\equiv {\bf SU}(p,\H)\times {\bf SU}(q,\H)$.

Their subspaces $\p$ can be described as $\left[\begin{array}{cc}0&B\\B^*&0\end{array}\right]$ where $B$ is an arbitrary complex (respectively quaternionic) matrix of size $p\times q$. The Cartan subalgebra $\a$ is chosen in the same way as for $\so(p,q)$.

\begin{cor}\label{iff}
Consider the symmetric spaces ${\bf SO}_0(p,q)/{\bf
SO}(p)\times{\bf SO}(q)$, ${\bf SU}(p,q)/{\bf S}({\bf
U}(p)\times{\bf U}(q))$ and ${\bf Sp}(p,q)/{\bf
Sp}(p)\times{\bf Sp}(q)$, $q>p$.

Let $X$, $Y\in\a$. Then the measure $\delta_{e^X}^\natural \star
\delta_{e^Y}^\natural$ is absolutely continuous
 if and only if $X$ and $Y$ are eligible, as defined in Definition \ref{defEligible}.
\end{cor}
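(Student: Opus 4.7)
My plan is to reduce both directions of the equivalence to the $\SO_0$-case already proved, exploiting the natural chain of inclusions $\SO_0(p,q) \subset \SU(p,q) \subset {\bf Sp}(p,q)$ with matching maximal compact subgroups $K_\R = \SO(p)\times\SO(q) \subset K_\C = {\bf S}({\bf U}(p)\times {\bf U}(q)) \subset K_\H = {\bf Sp}(p)\times{\bf Sp}(q)$, together with the fact that the Cartan subspace $\a$ is common to all three symmetric spaces.

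For sufficiency, I would assume $X, Y \in \a$ are eligible. Theorem \ref{MainReduced} combined with Proposition \ref{VXVY} then provides $k \in K_\R$ such that $a(e^X K_\R e^Y) \subset \overline{\a^+}$ has nonempty interior. The essential observation I will use is that for any $g \in \SO_0(p,q)$, the $\SO_0$-Cartan decomposition $g = k_1 e^{a(g)} k_2$ with $k_1, k_2 \in K_\R$ and $a(g) \in \overline{\a^+}$ is automatically a valid Cartan decomposition in the larger groups as well, and the component $a(g) \in \overline{\a^+}$ is uniquely determined there. Consequently $a(e^X K_\R e^Y) \subset a(e^X K_\C e^Y) \subset a(e^X K_\H e^Y)$, so the latter two sets inherit a nonempty interior, and the characterization recalled in Section 1 (the density of $m_{X,Y}$ exists iff its support has nonempty interior) will yield absolute continuity in the complex and quaternionic cases.

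For necessity, my plan is to rerun the proof of Proposition \ref{Necessary} essentially verbatim. Lemma \ref{repeat} is already stated uniformly for $\SU(N,\F)$ with $\F \in \{\R, \C, \H\}$, so it applies identically to all three settings. The only point requiring care is that Remark \ref{S} transfers: for $g \in \SU(p,q)$ (respectively ${\bf Sp}(p,q)$), the identity $g^* g = k_2^* e^{2 a(g)} k_2$ combined with the diagonalization $S^* e^{2 a(g)} S = \diag[e^{2 a_1}, \dots, e^{2 a_p}, 1, \dots, 1, e^{-2 a_p}, \dots, e^{-2 a_1}]$ still shows that the entries of ${\cal D}_{a(g)}$ are the $p$ largest logarithmic singular values of $g$, since $S$ is real orthogonal and $k_2 S$ therefore remains unitary (respectively symplectic after the standard embedding). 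The counting argument from the proof of Proposition \ref{Necessary} then forces repetitions in the entries of $\tilde a(e^X k e^Y)$ whenever $X$ and $Y$ are not eligible, so the support has empty interior and absolute continuity fails.

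The main obstacle is not conceptual but a matter of verifying that the machinery developed for $\SO_0(p,q)$ (Remark \ref{S}, Lemma \ref{repeat}, Proposition \ref{VXVY}, and the matching of Cartan decompositions) behaves well under the inclusion into the complex and quaternionic groups; all of these checks are routine because the matrix $S$ is real and the Cartan subalgebra $\a$ is shared by the three spaces.
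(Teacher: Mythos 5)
Your proposal is correct and follows essentially the same route as the paper: the sufficiency direction relies on the inclusion chain $a(e^X K_\R e^Y) \subset a(e^X K_\C e^Y) \subset a(e^X K_\H e^Y)$ combined with Theorem \ref{MainReduced}, and the necessity direction reruns Proposition \ref{Necessary} via Lemma \ref{repeat} over $\F = \C$ and $\F = \H$. Your spelled-out verification that the common Cartan subspace $\a$ and the real orthogonal matrix $S$ make Remark \ref{S} and the Cartan projections agree across the three groups is a detail the paper leaves implicit, but it is the same underlying argument.
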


\begin{proof}
Let $X$, $Y\in\a$. If they are eligible then since 
\begin{align*}
a(e^X\,({\bf SO}(p)\times{\bf SO}(q))\,e^Y)\subset a(e^X\,{\bf S}({\bf
U}(p)\times{\bf U}(q))\,e^Y)\subset a(e^X\,({\bf
Sp}(p)\times{\bf Sp}(q))\,e^Y),
\end{align*}
it follows from Theorem \ref{MainReduced} that these sets have nonempty interior. Hence the density exists in all three cases.

On the other hand, given Lemma \ref{repeat}, one can reproduce Proposition \ref{Necessary} using $\F=\C$ and $\F=\H$ to show that the eligibility condition is necessary in the complex and quaternionic cases. 
\end{proof}

We will conclude this paper with two further applications.

\begin{proposition}
Let $X$ and $Y\in\a$ be such that
$\left(\delta_{e^X}^\natural\right)^{*2}$ and 
$\left(\delta_{e^Y}^\natural\right)^{*2}$ are absolutely continuous. Then 
$\delta_{e^X}^\natural*\delta_{e^Y}^\natural$ is absolutely continuous.
\end{proposition}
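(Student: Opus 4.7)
The plan is to derive this immediately from Corollary \ref{iff}, observing that the eligibility condition is essentially additive in the two arguments. First, I would write the configurations $X = X[\mathbf{s}; u]$ and $Y = Y[\mathbf{t}; v]$, and apply Corollary \ref{iff} to each hypothesis. Since $\left(\delta_{e^X}^\natural\right)^{*2} = \delta_{e^X}^\natural \star \delta_{e^X}^\natural$ is absolutely continuous, the pair $(X,X)$ must be eligible, which spells out as $2\max(\mathbf{s},2u) \le 2p$, i.e.\ $\max(\mathbf{s},2u) \le p$. In the same way the hypothesis on $Y$ gives $\max(\mathbf{t},2v) \le p$.

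Next I would simply add these two inequalities to obtain
\begin{align*}
\max(\mathbf{s},2u) + \max(\mathbf{t},2v) \le 2p,
\end{align*}
which is precisely the definition of $X$ and $Y$ being eligible (Definition \ref{defEligible}). A second application of Corollary \ref{iff}, in the reverse direction, then yields the absolute continuity of $\delta_{e^X}^\natural \star \delta_{e^Y}^\natural$.

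There is essentially no obstacle: the whole content has already been absorbed into the sharp criterion proved earlier in the paper. The only point worth a brief remark is that the hypotheses automatically force $X \ne 0$ and $Y \ne 0$ (otherwise the relevant convolution square would equal $m_K$, which is singular on $G$), so the non-triviality built into the eligibility condition is not an extra assumption but a consequence. No other case analysis is needed, and the proof reduces to the two-line arithmetic above.
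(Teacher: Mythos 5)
Your proof is correct and is essentially identical to the paper's: both translate the hypothesis into eligibility of the pairs $(X,X)$ and $(Y,Y)$ via the main criterion, deduce $\max(\mathbf{s},2u)\le p$ and $\max(\mathbf{t},2v)\le p$, add these to get eligibility of $(X,Y)$, and convert back. The extra remark about $X,Y\ne 0$ is a harmless aside that the paper leaves implicit.
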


\begin{proof}
\A Let $X=X[{\bf s};u]$ and $Y=Y[{\bf t};v]$.\B
We know that the couple $(X,X)$ is eligible; therefore
\begin{align*}
2\,\max\{{\bf s},2\,u\}\leq 2\,p.
\end{align*}

In the same manner, $\max\{{\bf t},2\,v\}\leq p$.
Hence,
\begin{align*}
\max\{{\bf s},2\,u\}+\max\{{\bf t},2\,v\}\leq p+p=2\,p
\end{align*}
which means that $X$ and $Y$ are eligible. Consequently,
$\delta_{e^X}^\natural*\delta_{e^Y}^\natural$ is absolutely continuous.
\end{proof}

If $X\in\a$ and $X\not=0$, it is important to know for which convolution powers $l$
the measure $\left(\delta_{e^X}^\natural\right)^{l}$ is absolutely continuous.
 This problem is equivalent to the study of the absolute continuity of convolution powers of uniform orbital measures $\delta_{g}^\natural=m_K*\delta_{g}*m_K$
 for $g\not\in K$.

It was proved in \cite[Corollary 7]{PGPS_Fun2010} that it is always the case for $l\ge r+1$, where
$r$ is the rank of the symmetric space $G/K$. It was also conjectured 
 (\cite[Conjecture 10]{PGPS_Fun2010}) that $r+1$ is optimal for this property,
 which was effectively proved for symmetric spaces of type $A_n$(\cite[Corollary 18]{PGPS_Fun2010}). 
 In the following theorem, the conjecture is shown not to hold on symmetric spaces
 of type \A $B_p$\B, where $r=p$. Thanks to the rich structure of the root system $B_p$,
 already all $p$-th powers of orbital measures are absolutely continuous and $p$ is optimal 
 for this property.

\begin{theorem}\label{*power}
On symmetric spaces ${\bf SO}_0(p,q)/{\bf
SO}(p)\times{\bf SO}(q)$, ${\bf SU}(p,q)/{\bf S}({\bf
U}(p)\times{\bf U}(q))$ and ${\bf Sp}(p,q)/{\bf
Sp}(p)\times{\bf Sp}(q)$, $q>p$, 
for every nonzero $X\in\a$, the measure $(\delta_{e^X}^\natural)^p$ is absolutely
continuous. Moreover, $p$ is the smallest value for which this is
true: if $X$ has a configuration $[1;p-1]$ then the measure $(\delta_{e^X}^\natural)^{p-1}$
is singular.
\end{theorem}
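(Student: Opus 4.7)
The theorem splits into two parts: the sufficiency that $(\delta_{e^X}^\natural)^p$ is absolutely continuous for every nonzero $X\in\a$, and the optimality that $(\delta_{e^X}^\natural)^{p-1}$ is singular when $X=X[1;p-1]$.

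For the \emph{optimality}, I would iterate Lemma \ref{repeat}. Denote by $A_l\subseteq\overline{\a^+}$ the Cartan projection of $\mathrm{supp}\,(\delta_{e^X}^\natural)^l$. Applying Lemma \ref{repeat} to $U=\tilde X$ and $V=\tilde Y$ (the $\SL(p+q)$-diagonal forms obtained via $S$, with all zero entries grouped first), I get that every $Z\in a(e^X K e^Y)$ has zero-count $v_Z\ge v_Y+u_X-p$ in $\mathcal D_Z$, where $u_X,v_Y,v_Z$ denote the numbers of zero diagonal entries of $\mathcal D_X,\mathcal D_Y,\mathcal D_Z$. For $X=X[1;p-1]$, $u_X=p-1$, so $v_Z\ge v_Y-1$; iterating from $A_1=\{X\}$ (where $v=p-1$) shows $v\ge p-l$ throughout $A_l$. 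At $l=p-1$ this forces $v\ge 1$, so $A_{p-1}\subseteq\{H\in\a:H_p=0\}$ has empty interior in $\a$, and the measure is singular.

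For the \emph{sufficiency}, I would first dispatch the easy case $\max(\mathbf{s},2u)\le p$: then $(X,X)$ is eligible and by Corollary \ref{iff} the square $(\delta_{e^X}^\natural)^2$ is absolutely continuous; since convolution with any finite measure preserves absolute continuity, so is $(\delta_{e^X}^\natural)^p=(\delta_{e^X}^\natural)^2\star(\delta_{e^X}^\natural)^{p-2}$. It remains to treat the case $\max(\mathbf{s},2u)=2u>p$ (i.e.\ $u>p/2$). Here I would construct, by induction on $l$, an element $Y_l\in A_l$ of configuration $[1^{p-v_l};v_l]$ with $v_l=\max(0,u-(l-1)(p-u))$ and all non-zero entries of $\mathcal D_{Y_l}$ distinct. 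The base case $Y_1=X$ is immediate. For the inductive step, one takes $Y_{l+1}=a(e^Xke^{Y_l})$ with $k\in K$ chosen generically: Lemma \ref{repeat} provides the lower bound $v_{l+1}\ge v_l-(p-u)$, and on a dense open subset of $K$ this bound is saturated with all new non-zero entries distinct, by a totality/transversality argument analogous to Lemma \ref{total} and Steps 2--3 of the proof of Theorem \ref{MainReduced}. At $l=p-1$, since $u\le p-1$, a direct check gives $v_{p-1}\le 1$ and $\max\mathbf{t}_{p-1}=1$, so
\begin{align*}
\max(\mathbf{t}_{p-1},2v_{p-1})+\max(\mathbf{s},2u)\le 2+2u\le 2p,
\end{align*}
meaning that $(X,Y_{p-1})$ is eligible. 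By Corollary \ref{iff}, $a(e^XKe^{Y_{p-1}})\subseteq A_p$ has non-empty interior, so $(\delta_{e^X}^\natural)^p$ is absolutely continuous.

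The main obstacle is the inductive step in the sufficiency proof: showing that for generic $k\in K$ the new Cartan projection $a(e^Xke^{Y_l})$ saturates the Lemma \ref{repeat} lower bound on $v$ while keeping all non-zero entries distinct. The naive ``dense subvariety'' phrasing glosses over the fact that the Cartan projection map is only continuous (not smooth) across Weyl-chamber walls, so the cleanest route is probably a constructive one, mimicking the ``activate one new coordinate per step'' scheme of the $\SO_0(p-1,p)\hookrightarrow\SO_0(p,p+1)$ induction in Section 4, and then upgrading the existence statement to genericity by the openness of the configuration condition.
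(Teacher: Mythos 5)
Your treatment of the optimality half (the claim that $(\delta_{e^X}^\natural)^{p-1}$ is singular for $X=X[1;p-1]$) is essentially the paper's own argument: iterate Lemma \ref{repeat} to propagate the lower bound on the zero count $v$ through the convolution powers, arriving at $v\ge 1$ on all of $S_X^{p-1}$, hence empty interior. That part is fine.

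For the sufficiency half, your route diverges from the paper and has a genuine gap at precisely the point you flag. You want an element $Y_l\in S_X^l$ whose Cartan configuration \emph{saturates} the Lemma \ref{repeat} lower bound $v_{l+1}\ge v_l-(p-u)$ with all nonzero entries distinct, and you assert this holds for ``generic'' $k\in K$ by a ``totality/transversality argument analogous to Lemma \ref{total}.'' But Lemma \ref{total} governs a Zariski-open \emph{linear-algebra} condition (nonvanishing of minors), whereas the set of $k$ with prescribed zero count in $a(e^Xke^{Y_l})$ is a semialgebraic set defined through the singular-value map, which is continuous but not smooth across Weyl walls; a ``dense open subset'' statement does not follow from the ingredients the paper has developed, and you would need a separate argument that there exists even \emph{one} $k$ achieving equality with distinct nonzero entries. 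You also work harder than necessary: the faster decrement $v_{l+1}=v_l-(p-u)$ is not needed; it suffices to lower $v$ by one per step. The paper achieves exactly this, and does so without any genericity-of-the-Cartan-projection claim, by a different induction: it inducts on $p$, restricting the middle $K$-factors to a copy $K_0\cong\SO(p-1)\times\SO(q-1)\subset K$ that fixes the $p$-th and last coordinates. Because the last zero of $\mathcal D_X$ sits in the coordinate fixed by $K_0$, the resulting products live inside a rank-$(p-1)$ subproblem where the induction hypothesis produces an $H_0$ of configuration $[1^{p-2};2]$; one then applies eligibility and Corollary \ref{iff} \emph{in $\SO_0(p-1,q-1)$} to upgrade this to an $H\in S_X^{p-1}$ of configuration $[1^{p-1};1]$, and the final product-step with this $H$ is eligible in $\SO_0(p,q)$. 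This bypasses entirely the generic-configuration claim your proposal rests on. If you want to salvage your approach, you would need to prove a lemma of the form ``whenever $(X,Y)$ is ineligible only by one, there exists $k\in K$ with $a(e^Xke^Y)$ of configuration $[1^{p-1};1]$,'' and that is not cheaper than the paper's subgroup-restriction argument.
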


\begin{proof}
We will write $S_X^l$ for the set $a(e^X\,K\,\dots,K\,e^X)$ where the
factor $e^X$ appears $l$ times. Note that $(\delta_{e^X}^\natural)^l$ is
absolutely continuous if and only if $S_X^l$ has nonempty interior.

We prove first that for $l<p$, the measure $(\delta_{e^X}^\natural)^l$ may not be
absolutely continuous. Let $X=X[1;p-1]$. Using Lemma \ref{repeat}
repeatedly, as in the proof of Proposition \ref{Necessary}, we show that for $l< p$, there are at least $p-l$
diagonal entries of ${\cal D}_H$ which are equal to 0 for every $H\in
 S_X^l$. Consequently, $S_X^l$ has empty interior and
 $(\delta_{e^X}^\natural)^l$ is not absolutely continuous when $l\le p-1$.

We will now show that $(\delta_{e^X}^\natural)^p$ has a density for every $X\not=0$. 

If $X=X[{\bf s};0]$ then the measure $(\delta_{e^X}^\natural)^2$ is already absolutely continuous (the couple $(X,X)$
 is eligible). Suppose then that $X=X[{\bf s};u]\in\overline{\a^+}$, $u>0$. 

Remark that if $H\in S_X^l$ then $a(e^X\,K\,e^H)\subset S_X^{l+1}$.
Indeed, we have $e^X\,k_1\,E^X\,\dots\,k_{l-1}\,e^X=k_a\,e^H\,k_b$ and
therefore $a(e^X\,K\,e^H)=a(e^X\,K\,k_a\,e^H\,k_b)
=a(e^X\,K\,e^X\,k_1\,\dots\,k_{l-1}\,e^X)
\subset S_X^{l+1}$.

We claim that there exists $H\in S_X^{p-1}$ such that $H=H[{
 1}^{p-1};1]$ or $H\in \a^+$.

We prove the claim using induction on $p$. If $p=2$ then
$S_X^{p-1}=\{X\}$ and the result follows (in that case, $u$ cannot be
higher than 1 for $X\not=0$). 

Suppose that the claim is true for $p-1\geq 2$. Let $K_0=\left 
[\begin{array}{cccc} \SO(p-1)&0&0&0\\
0&1&0&0\\ 0&0&\SO(q-1)&0\\ 0&0&0&1\end{array}\right]$. Consider 
the set $B=a(e^X\,K_0\,e^X\,\dots\,e^X)$ with $p-1$ factors $e^X$. By the
induction hypothesis, there exists $H_0\in B$ with
$H_0=H_0[{ 1}^{p-2};2]$ or $H_0=H_0[{ 1}^{p-1};1]$. In the second case,
we are done. 

 If $H_0=H_0[{ 1}^{p-2};2]\in B$, we can assume that the diagonal entries which are 0 in ${\cal
 D}_X$ and in $ H_0$ are at the end. 
 We note that $X$ and $H_0$ considered without their last entries are
 eligible in $\SO_0(p-1,q-1)$, their
 configurations being $[{\bf s};u-1]$ and $[{ 1}^{p-2};1]$ respectively. Hence
 $a(e^X\,K_0\,e^{H_0})$ has nonempty interior in the subspace
 $\overline{\a^+}\cap\{H_p=0\}$. Therefore, there exists $H\in
 a(e^X\,K_0\,e^{H_0}) \subset S_X^{p-1}$ with $H=H[{ 1}^{p-1};1]$
 which proves the claim. 

To conclude, we take $H\in S_X^{p-1}$ with $H\in\a^+$ or $H=H[{1}^{p-1};1]$. In both cases, $X$ and $H$ are eligible, so by Corollary \ref{iff}
the set $a(e^X\,K\,e^H)$ has nonempty interior. 
 As $a(e^X\,K\,e^H) \subset S_X^p$, this ends the proof.
\end{proof}

\section{Conclusion}

With this paper and with \cite{PGPS_Lie2010}, we have now obtained sharp criteria on singular $X$ and $Y$ for the existence of the density of $\delta_{e^X}^\natural \star
\delta_{e^Y}^\natural$ for the root systems of type $A_n$ and type $B_p$. Thanks to \cite{PGPS_Fun2010} and Theorem \ref{*power} of the present paper, sharp criteria
are now given for the $l$-th convolution powers $ (\delta_{e^X}^\natural)^l$ to be absolutely continuous for any $X\not=0$, $X\in\a$. 

Although there is considerable similarity between the criteria for both type of spaces, a characterization of eligibility that would be applicable for all Riemannian symmetric spaces of noncompact type has yet to emerge. The solution of the second problem in Theorem \ref{*power} seems to indicate that the answer may depend on the type of the symmetric space.

\noindent{P.\ Graczyk\\
D\'epartement de math\'ematiques\\
Universit\'e d'Angers\\
piotr.graczyk@univ-angers.fr}\\[2mm]
{P.\ Sawyer\\
Department of Mathematics and Computer Science\\
Laurentian University\\
psawyer@laurentian.ca
}

\end{document}